\newtheorem{theorem}{Theorem}[section]
\newtheorem{lemma}[theorem]{Lemma}
\theoremstyle{definition}
\newtheorem{corollary}[theorem]{Corollary}
\newtheorem{conjecture}[theorem]{Conjecture}
\theoremstyle{remark}
\newtheorem{remark}{Remark}[section]
\newtheorem{teo}{Theorem}[section]
\theoremstyle{definition}
\newtheorem{dfn}[teo]{Definition}
\newtheorem{rk}[teo]{Remark}
\newtheorem{ex}[teo]{Example}
\newtheorem{que}[teo]{Question}
\numberwithin{equation}{section}
\newcommand{\N}{\mathcal N}
\newcommand{\R}{\mathcal R}
\newcommand{\bea} {\begin{eqnarray*}}
\newcommand{\beq} {\begin{equation}}
\newcommand{\bey} {\begin{eqnarray}}
\newcommand{\eea} {\end{eqnarray*}}
\newcommand{\eeq} {\end{equation}}
\newcommand{\eey} {\end{eqnarray}}
\def\Z{{\mathbb Z}}
\def\Fix{\operatorname{Fix}}
\def\N{{\mathbb N}}
\def\<{\langle}
\def\>{\rangle}
\def\R{{\mathbb R}}
\def\Z{{\mathbb Z}}
\def\M{{\mathcal M}}
\def\tr{\mathop{\rm tr}\nolimits}
\def\Ind{\operatorname{Ind}}
\def\ind{\operatorname{ind}}
\def\1{\mathbf 1}
\newcommand{\be}{\begin{enumerate}}
\newcommand{\ee}{\end{enumerate}}
\newcommand{\bq}{\begin{question}}
\newcommand{\eq}{\end{question}}
\newcommand{\bcj}{\begin{conjecture}}
\newcommand{\ecj}{\end{conjecture}}
\newcommand{\bc}{\begin{corollary}}
\newcommand{\ec}{\end{corollary}}
\newcommand{\bl}{\begin{lemma}}
\newcommand{\el}{\end{lemma}}
\newcommand{\btl}{\begin{technicalLemma}}
\newcommand{\etl}{\end{technicalLemma}}
\newcommand{\bp}{\begin{proposition}}
\newcommand{\ep}{\end{proposition}}
\newcommand{\bft}{\begin{fact}}
\newcommand{\eft}{\end{fact}}
\newcommand{\brk}{\begin{remark}}
\newcommand{\erk}{\end{remark}}
\newcommand{\bd}{\begin{Dn}}
\newcommand{\ed}{\end{Dn}}
\def\M{{\mathcal M}}
\def\tr{\mathop{\rm tr}\nolimits}
\numberwithin{equation}{section}
\newcommand{\Q}{\mathbf Q}
\newcommand{\fix}[1]{\mathop{\textrm{Fix}}(#1)}
\def \crit{\operatorname{crit}}
\def\<{\langle}
\def\>{\rangle}
\newcommand{\de}{\mathrm{d}}
\newcommand{\De}{\mathrm{D}}
\newcommand{\Cc}{\mathcal{C}}
\renewcommand{\to}{\rightarrow}
\newcommand{\To}{\longrightarrow}
\newcommand{\Mapsto}{\longmapsto}
\newcommand{\inclusion}{\hookrightarrow}
\newcommand{\p}{\partial}
\newcommand{\Of}{\Omega_{\phi}}
\newcommand{\ophi}{\omega_{\phi}}
\DeclareMathOperator{\id}{id}
\DeclareMathOperator{\im}{image}
\DeclareMathOperator{\area}{area}
\DeclareMathOperator{\symp}{Symp}
\DeclareMathOperator{\diff}{Diff}
\DeclareMathOperator{\sign}{sign}
\DeclareMathOperator{\flux}{Flux}
\DeclareMathOperator{\grow}{Growth}
\def\tr{\mathop{\rm tr}\nolimits}
\def\Ind{\operatorname{Ind}}
\def\ind{\operatorname{ind}}
\def \bd {\partial}
\newcommand{\HF}{HF_*}
\newcommand{\CF}{CF_*}
\def\Fix{\operatorname{Fix}}
\def\N{{\mathbb N}}
\begin{document}

\title[ Floer homology and the Poincare - Birkhoff theorem]
{Nielsen  theory, Floer homology and a generalisation of
 the Poincare - Birkhoff theorem}

\author{Alexander Fel'shtyn}
\address{ Institute of Mathematics, University of Szczecin,
ul. Wielkopolska 15, 70-451 Szczecin, Poland
and Department of Mathematics, Boise State University, 1910
University Drive, Boise, Idaho, 83725-155, USA }
\email{felshtyn@diamond.boisestate.edu, felshtyn@mpim-bonn.mpg.de}

\keywords{Nielsen number, 
symplectic Floer homology, Poincare - Birkhoff theorem, Arnold conjecture }
\subjclass{ 37C25;  53D; 37C30; 55M20}

\begin{abstract}
The purpose of this mostly expository paper is to discuss  a connection between Nielsen fixed point theory and  symplectic  Floer homology for symplectomorphisms of surface  and  a  calculation  of  Seidel's symplectic Floer homology for different mapping classes. We also describe  symplectic zeta functions and an  asymptotic
symplectic  invariant. A generalisation of the  Poincare - Birkhoff fixed point theorem  and Arnold conjecture   is proposed.

\end{abstract}

\maketitle

\tableofcontents

\section{Introduction}

Before discussing the main   results of the paper, we briefly describe the few
basic notions of Nielsen fixed point theory which will be used.
We assume  $X$ to be a connected, compact
polyhedron and $f:X\rightarrow X$ to be a continuous map.
Let $p:\tilde{X}\rightarrow X$ be the universal cover of $X$
and $\tilde{f}:\tilde{X}\rightarrow \tilde{X}$ a lifting
of $f$, i.e. $p\circ\tilde{f}=f\circ p$.
Two liftings $\tilde{f}$ and $\tilde{f}^\prime$ are called
{\sl conjugate} if there is a $\gamma\in\Gamma\cong\pi_1(X)$
such that $\tilde{f}^\prime = \gamma\circ\tilde{f}\circ\gamma^{-1}$.
The subset $p(Fix(\tilde{f}))\subset Fix(f)$ is called
{\sl the fixed point class of $f$ determined by the lifting class $[\tilde{f}]$}.Two fixed points $x_0$ and $x_1$ of $f$ belong to the same fixed point class iff  there is a path $c$ from $x_0$ to $x_1$ such that $c \cong f\circ c $ (homotopy relative endpoints). This fact can be considered as an equivalent definition of a non-empty fixed point class.
 Every map $f$  has only finitely many non-empty fixed point classes, each a compact  subset of $X$.
A fixed point class is called {\sl essential} if its index is nonzero.
The number of essential fixed point classes is called the {\sl Nielsen number}
of $f$, denoted by $N(f)$.The Nielsen number is always finite.
 $N(f)$ is a  homotopy type  invariant.
In the category of compact, connected polyhedra, the Nielsen number
of a map is, apart from certain exceptional cases,
 equal to the least number of fixed points
 of maps with the same homotopy type as $f$.

 Recently a  connection between symplectic Floer homology and Nielsen fixed
point theory was discovered \cite{ff, G}. The author came to the  idea that Nielsen fixed point theory  is  connected  with  symplectic  Floer homology
theory  of symplectomorphisms of  surfaces   at the Autumn 2000, after conversations with Joel Robbin and Dan Burghelea.
This connection is based on   the following simple fact:  Gromov pseudoholomorphic curve in symplectic fixed point  theory
 is nothing else as a Nielsen holomorphic disc.
In the  dimension two a diffeomorphism is symplectic if it preserves
area. As  a consequence, the symplectic geometry of surfaces lacks many of the interesting phenomena which are encountered in higher dimensions. For  example, two symplectic automorphisms of a closed surface are symplectically isotopic iff they are homotopic, by a theorem of Moser \cite{Mo}.
On other hand symplectic fixed point theory is very nontrivial in dimension 2.
A selebrated landmark in this subject is Poincar\'e's last geometric theorem, proved by
Birkhoff \cite{B} in the 1925, which asserts that an area-preserving twist map of the annulus must have
at least two distinct fixed points.
It is known that symplectic Floer homology for  surface symplectomorphism  is the $d=1$ part of periodic Floer homology
of this  symplectomorphism \cite{HS}, a Floer homology theory whose chain complex is generated by certain
multisets of periodic orbits and whose differentials count certain embedded pseudoholomorphic curves.
This theory is conjectured \cite{HS} to be isomorphic to the Seiberg - Witten Floer homology \cite{KM}
 of the  mapping torus of symplectomorphism in certain  $spin^c$-structures and also  is conjectured to agree with Heegaard Floer homology $HF^+$ \cite{OS} of the mapping torus.
It is known also that symplectic Floer homology of symplectomorphism of surface  is a simple model for the instanton Floer homology of the
  mapping torus of this symplectomorphism \cite{ds}.

In the chapter \ref{sec:floer} we discuss  the connection between  symplectic Floer homology theory  and Nielsen fixed point theory and  the calculations  of 
Seidel's symplectic Floer homology for  periodic  \cite{G, ff} and
algebraically finite mapping classes \cite{G}.
In the chapter \ref{sec:zeta} we  describe symplectic zeta functions and an  asymptotic invariant of monotone symplectomorphism. 
In the chapter \ref{sec:problem}  we discuss a generalisation of the  Poincar\'e - Birkhoff theorem and Arnold conjecture  and  the calculations  of 
Seidel's symplectic Floer homology for pseudo-Anosov and reducible mapping classes \cite{cot}
The results of this paper were announced on the  Symplectic Geometry Seminars
at Stanford and Princeton    in April and October, 2007.

The author is  grateful to
 A. Cotton-Clay,  Ya. Eliashberg, Wu-Chung Hsiang, E. Ionel,  J. Kedra,   K. Ono,   Y. Rudyak, G. Tian, V. Turaev and M. Usher    for stimulating discussions and comments.

Parts of this article were written while the author  was visiting the
Max-Planck-Institute f\"ur Mathematik,
Bonn in 2004-2005.
The author would like to thank the Max-Planck-Institute f\"ur Mathematik,
Bonn for kind hospitality and support.

\section{Nielsen fixed point theory and symplectic Floer homology} \label{sec:floer}

\subsection{ Symplectic Floer homology}

\subsubsection{Monotonicity}

In this section we discuss the notion of monotonicity as defined in \cite{S,G}.
Monotonicity plays important role for  Floer homology in two
dimensions.
Throughout this article, $M$ denotes a closed connected and oriented
2-manifold of genus $\geq2$.   Pick an everywhere positive two-form $\omega$
on $M$.

Let $\phi\in\symp(M,\omega)$, the group of  symplectic automorphisms
 of the two-dimensional symplectic
manifold $(M,\omega)$.
The mapping torus of $\phi$,  $T_\phi = \R\times M/(t+1,x)\sim(t,\phi(x)),$
is a 3-manifold fibered over $S^1=\R/\Z$.
There are two natural second
cohomology classes on $T_\phi$, denoted by $[\ophi]$ and $c_\phi$. The first one is
represented by the closed two-form $\ophi$ which is induced from the pullback
of $\omega$ to $\R\times M$. The second is the Euler class of the
vector bundle
$
V_\phi =
\R\times T M/(t+1,\xi_x)\sim(t,\de\phi_x\xi_x),
$
which is of rank 2 and inherits an orientation from $TM$.

$\phi\in\symp(M,\omega)$ is called {\bf monotone}, if
$
[\omega_\phi] = (\area_\omega(M)/\chi(M))\cdot c_\phi
$
in $H^2(T_\phi;\R)$; throughout this article
$\symp^m(M,\omega)$ denotes the set of
monotone symplectomorphisms.

Now $H^2(T_\phi;\R)$ fits into the following short exact sequence \cite{S,G}
\begin{equation}\label{eq:cohomology}
0 \To  \frac{H^1(M;\R)}{\im(\id-\phi^*)}
\stackrel{d}{\To} H^2(T_\phi;\R)
\stackrel{r^*}{\To} H^2(M;\R),
\To 0.
\end{equation}
where the map $r^*$  is restriction to the fiber.
The map $d$ is defined as follows.
Let $\rho:I\to\R$ be a smooth function which vanishes near $0$ and $1$ and
satisfies $\int_0^1\!\rho\,\de t=1$.
If $\theta$ is a closed 1-form on $M$, then
$\rho\cdot\theta\wedge\de t$ defines a closed 2-form on $T_\phi$; indeed
$
d[\theta] = [\rho\cdot\theta\wedge\de t].
$
The map $r:M\inclusion T_\phi$ assigns to each $x\in M$ the
equivalence class of $(1/2,x)$.
Note, that $r^*\ophi=\omega$ and $r^*c_\phi$ is the Euler class of
$TM$.
Hence, by \eqref{eq:cohomology}, there exists a unique class
$m(\phi)\in H^1(M;\R)/\im(\id-\phi^*)$ satisfying
$
d\,m(\phi) = [\ophi]-(\area_\omega(M)/\chi(M))\cdot c_\phi,
$
where $\chi(M)$ denotes the Euler characteristic of $M$.
Therefore, $\phi$ is monotone if and only if $m(\phi)=0$.
 
Because $c_\phi$ controls the index, or expected dimension, of moduli
spaces of holomorphic curves under change of homology class and $\omega_\phi$
controls their energy under change of homology class, the monotonicity condition
ensures that the energy is constant on the index one components of the moduli space,
which implies compactness and, as a  corollary, finite count  in a  differential 
of the Floer complex.

We recall the fundamental properties of $\symp^m(M,\omega)$ from \cite{S,G}.
Let  $\diff^+(M)$ denotes  the group of orientation preserving
diffeomorphisms of $M$.
\smallskip\\
(Identity) $\id_M \in \symp^m(M,\omega)$.
\smallskip\\
(Naturality)\label{page:natur}
If $\phi\in\symp^m(M,\omega),\psi\in\diff^+(M)$, then
$\psi^{-1}\phi\psi\in\symp^m(M,\psi^*\omega)$.
\smallskip\\
(Isotopy)
Let $(\psi_t)_{t\in I}$ be an isotopy in $\symp(M,\omega)$, i.e. a smooth
path with $\psi_0=\id$.
Then
$
m(\phi\circ\psi_1)=m(\phi)+[\flux(\psi_t)_{t\in I}]
$
in $H^1(M;\R)/\im(\id-\phi^*)$; see \cite[Lemma 6]{S}.
For the definition of the flux homomorphism see \cite{MS}.
\smallskip\\
(Inclusion)
The inclusion $\symp^m(M,\omega)\inclusion\diff^+(M)$ is a homotopy
equivalence.
\smallskip\\
(Floer homology)
To every $\phi\in\symp^m(M,\omega)$ symplectic Floer homology
theory assigns a $\Z_2$-graded vector space $\HF(\phi)$ over $\Z_2$, with an
additional multiplicative structure, called the quantum cap product,
$
H^*(M;\Z_2)\otimes\HF(\phi)\To\HF(\phi).
$
For $\phi=\id_M$ the symplectic Floer homology $\HF(\id_M)$ are  canonically isomorphic to ordinary homology  $H_*(M;\Z_2)$ and quantum cap product agrees with the ordinary cap product.
Each $\psi\in\diff^+(M)$ induces an isomorphism
$\HF(\phi)\cong\HF(\psi^{-1}\phi\psi)$ of $H^*(M;\Z_2)$-modules.
\smallskip\\
(Invariance)
If $\phi,\phi'\in\symp^m(M,\omega)$ are isotopic, then
$\HF(\phi)$ and $\HF(\phi')$ are naturally isomorphic as
$H^*(M;\Z_2)$-modules.
This is proven in \cite[Page 7]{S}. Note that every Hamiltonian perturbation
of $\phi$ (see \cite{ds}) is also in $\symp^m(M,\omega)$.
\smallskip\\
Now let $g$ be a mapping class of $M$, i.e. an isotopy class of $\diff^+(M)$.
Pick an area form $\omega$ and a
representative $\phi\in\symp^m(M,\omega)$ of $g$.
Then $\HF(\phi)$ is an invariant of $g$, which is
denoted by $\HF(g)$. Note that $\HF(g)$ is independent of the choice of an area
form $\omega$ by Moser's isotopy theorem \cite{Mo} and naturality of Floer homology.

\subsubsection{Floer homology}

Let $\phi\in\symp(M,\omega)$.There are two ways of constructing Floer
homology detecting its fixed points, $\Fix(\phi)$. Firstly, the graph of $\phi$
is a Lagrangian submanifold of $M\times M,(-\omega)\times\omega)$ and its fixed points correspond to the intersection points of graph($\phi$) with the
diagonal $\Delta=\{(x,x)\in M\times M\}$. Thus we have the Floer homology of the Lagrangian intersection $\HF(M\times M,\Delta, graph (\phi))$.
This intersection is transversal if the fixed points of $\phi$ are nondegenerate, i.e. if 1 is not an eigenvalue of $d\phi(x)$, for $x\in\Fix(\phi)$.
The second approach was mentioned by Floer in \cite{Floer1} and presented with
details by Dostoglou and Salamon  in \cite{ds}.We follow here  Seidel's approach \cite{S} which, comparable with \cite {ds}, uses a larger
class of perturbations, but such that the perturbed action form is still
cohomologous to the unperturbed. As a consequence, the usual invariance of
Floer homology under Hamiltonian isotopies is extended to the stronger
property stated above.
Let now  $\phi\in\symp^m(M,\omega)$, i.e  $\phi$ is  monotone.
Firstly, we give  the definition of $\HF(\phi)$  in the special case
where all the fixed points of $\phi$ are non-degenerate, i.e. for all $y\in\Fix(\phi)$, $\det(\id-\de\phi_y)\ne0$, and then
following Seidel´s approach  \cite{S} we consider general case when $\phi$ has degenerate fixed points.
 Let $\Of = \{ y \in C^{\infty}(\R,M)\,|\, y(t) = \phi(y(t+1)) \}$ be the twisted free loop space, which is also
the space of sections of $T_\phi \rightarrow S^1$. The action form is the
closed one-form $\alpha_\phi$ on $\Of$ defined by
$$
\alpha_\phi(y) Y = \int_0^1 \omega(dy/dt,Y(t))\,dt.
$$
where $y\in\Of$ and $Y\in T_y\Of$, i.e. $Y(t)\in T_{y(t)}M$ and
$Y(t)=\de\phi_{y(t+1)}Y(t+1)$ for all $t\in\R$.

The tangent bundle of any symplectic manifold admits an almost complex structure $ J:TM\To TM$ which is compatible with $\omega$ in sense that $(v,w)=\omega(v,Jw)$ defines a Riemannian metric.
 Let $J=(J_t)_{t \in \R}$
be a smooth path of $\omega$-compatible almost  complex structures on
$M$ such that $J_{t+1}=\phi^*J_t$.
If $Y,Y'\in T_y\Of$, then
$\int_0^1\omega(Y'(t),J_t Y(t))\de t$ defines a metric
on the loop space $\Of$. So  the critical points of $\alpha_\omega$ are the constant paths in  $\Of$ and  hence the fixed points of $\phi$. The negative gradient lines of $\alpha_\omega$ with respect to the
metric above  are solutions of the partial differential equations
with boundary conditions
\begin{equation}\label{eq:corbit}
\left\{\begin{array}{l}
u(s,t) = \phi(u(s,t+1)), \\
\p_s u + J_t(u)\p_t u = 0, \\
\lim_{s\to\pm\infty}u(s,t) \in \Fix(\phi)
\end{array}\right.
\end{equation}
These are exactly  Gromov's pseudoholomorphic  curves \cite{Gromov}.

For $y^\pm\in\Fix(\phi)$, let $\M(y^-,y^+;J,\phi)$ denote the space of smooth maps $u:\R^2\to M$ which satisfy the  equations \eqref{eq:corbit}.
Now to every $u\in\M(y^-,y^+;J,\phi)$ we  associate a Fredholm operator
$\De_u$ which linearizes (\ref{eq:corbit}) in suitable Sobolev spaces. The
index of this operator is given by the so called Maslov index $\mu(u)$,
which satisfies $\mu(u)=\deg(y^+)-\deg(y^-)\text{ mod }2$, where  $(-1)^{\deg y}=\sign(\det(\id-\de\phi_y))$. We have no bubbling, since for surface
$\pi_2(M)=0$. For a generic
$J$, every $u\in\M(y^-,y^+;J,\phi)$ is regular, meaning that $\De_u$ is onto.
Hence, by the implicit function theorem, $\M_k(y^-,y^+;J,\phi)$ is
a smooth $k$-dimensional manifold, where $\M_k(y^-,y^+;J,\phi)$ denotes the
subset of those $u\in\M(y^-,y^+;J,\phi)$ with $\mu(u)=k\in\Z$.
Translation of the $s$-variable defines a free $\R$-action on 1-dimensional
manifold $\M_1(y^-,y^+;J,\phi)$ and hence the quotient is a discrete set of points. The energy of a map $u:\R^2\to M$ is given by $
E(u) = \int_{\R}\int_0^1 \omega\big(\p_tu(s,t),J_t\p_tu(s,t)\big)\,\de t\de s$  for all $y\in\Fix(\phi)$.  P.Seidel has proved  in \cite{S} that if $\phi$ is monotone, then the energy is constant on each $\M_k(y^-,y^+;J,\phi)$.
Since all fixed points of $\phi$ are nondegenerate the set  $\Fix(\phi)$ is a finite set and the
$\Z_2$-vector space  $
\CF(\phi) := \Z_2^{\#\Fix(\phi)}$
admits a $\Z_2$-grading with $(-1)^{\deg y}=\sign(\det(\id-\de\phi_y))$,
for all $y\in \Fix(\phi)$.
The boundedness of the energy  $E(u)$   for monotone  $\phi$  implies that   the  0-dimensional  quotients   $\M_1(y_-,y_+,J,\phi)/\R$   are actually finite sets. Denoting by  $n(y_-,y_+)$  the number of
points mod 2 in each of them, one defines a differential  $\partial_{J}:
CF_*(\phi) \rightarrow CF_{* + 1}(\phi)$  by $\partial_{J}y_- =
\sum_{y_+} n(y_-,y_+) {y_+}$.  Due to gluing theorem  this Floer boundary operator satisfies  $\partial_{J} \circ
\partial_{J} = 0$.  For gluing  theorem to hold one needs again the  boundedness of the energy $E(u)$ .  It follows that  $ (\CF(\phi),\partial_{J})$  is a chain complex  and its homology is by definition the Floer homology of  $\phi$   denoted $HF_*(\phi)$. It  is independent of $J$ and is an invariant of $\phi$.

If $\phi$ has degenerate fixed points one needs to perturb equations
\eqref{eq:corbit} in order to define the Floer homology. Equivalently, one
could say that the action form needs to be perturbed.
 The necessary analysis is  given in \cite{S}, it  is essentially  the same as in the slightly different
situations considered in \cite{ds}. But  Seidel's approach also differs from the usual one in \cite{ds}. He uses a larger
class of perturbations, but such that the perturbed action form is still
cohomologous to the unperturbed.

\subsection{Nielsen numbers and Floer homology\label{sec:numbers}}

\subsubsection{Periodic diffeomorphisms}

\begin{lemma}\label{lemma:jiang}\cite{J}
Let $\phi$  a non-trivial orientation
preserving periodic diffeomorphism of a compact connected surface $M$
of Euler characteristic $\chi(M) \leq0$. Then each fixed point class of $\phi$
consists of a single point which has index 1.
\end{lemma}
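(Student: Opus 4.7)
The plan is to exploit geometric rigidity: up to conjugation, a nontrivial orientation-preserving periodic diffeomorphism of a surface with $\chi(M)\leq 0$ acts as an isometry of a complete metric of constant nonpositive curvature. First I would average any auxiliary Riemannian metric on $M$ over the finite cyclic group $\langle\phi\rangle$ to produce a metric for which $\phi$ is an isometry; after uniformization one may further arrange that the metric is hyperbolic when $\chi(M)<0$ and flat when $\chi(M)=0$. For $M$ with boundary one can first double along $\partial M$ to reduce to the closed case, since $2\chi(M)\leq 0$.

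Let $n=\mathrm{ord}(\phi)\geq 2$ and let $p:\tilde M\to M$ be the universal cover, identified with $\mathbb{H}^2$ or $\mathbb{R}^2$. Each nonempty fixed point class of $\phi$ has the form $p(\Fix(\tilde\phi))$ for some lift $\tilde\phi:\tilde M\to\tilde M$ with $\Fix(\tilde\phi)\neq\emptyset$. Since $\phi^n=\id_M$, the iterate $\tilde\phi^n$ is a deck transformation, and a deck transformation with a fixed point is the identity (the $\pi_1(M)$-action on the aspherical cover is free). Hence $\tilde\phi^n=\id_{\tilde M}$, so $\tilde\phi$ is itself an orientation-preserving isometry of $\tilde M$ of finite order possessing a fixed point; it is nontrivial, since otherwise $\phi$ would itself be trivial.

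The classification of isometries of $\mathbb{H}^2$ and $\mathbb{R}^2$ now forces $\tilde\phi$ to be a rotation about a single point $\tilde x\in\tilde M$, whence $\Fix(\tilde\phi)=\{\tilde x\}$ and the fixed point class in question is the singleton $\{p(\tilde x)\}$. At $x=p(\tilde x)$, the derivative $d\phi_x$ is a nontrivial rotation $R_\theta$ of $T_xM$ with $\theta\not\equiv 0\pmod{2\pi}$, so
\[
\det(\Id-d\phi_x)=2(1-\cos\theta)>0,
\]
and the fixed point index equals $+1$.

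The main obstacle I expect is producing the invariant constant-curvature metric: for $\chi(M)<0$ this is the Nielsen realization problem for cyclic subgroups of the mapping class group, which in this special case can be handled classically via Fenchel--Nielsen coordinates, while for $\chi(M)=0$ one uses that every automorphism of a flat torus is affine. Once this rigidity is established, the remainder of the argument is geometric and essentially forced by the local structure of elliptic isometries.
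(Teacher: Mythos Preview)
The paper does not give its own proof of this lemma; it is simply quoted from Jiang~\cite{J}. Your argument is correct and is essentially the standard geometric one found in the literature: realize $\phi$ as an isometry, pass to the universal cover, and use that a nontrivial finite-order orientation-preserving isometry of $\mathbb{H}^2$ or $\mathbb{R}^2$ is an elliptic rotation with a single fixed point, at which $d\phi_x\in SO(2)\setminus\{\id\}$ forces $\det(\id-d\phi_x)=2(1-\cos\theta)>0$.

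One small simplification worth noting: you do not strictly need uniformization to constant curvature, and hence can avoid invoking Nielsen realization even in the cyclic case. With \emph{any} $\phi$-invariant Riemannian metric (obtained by averaging), the lift $\tilde\phi$ is already a finite-order orientation-preserving diffeomorphism of the contractible surface $\tilde M\cong\mathbb{R}^2$, and the classical Brouwer--Ker\'ekj\'art\'o theorem forces such a map to be topologically conjugate to a rotation, hence to have a unique fixed point; your index computation then goes through verbatim because the invariant metric makes $d\phi_x$ a nontrivial element of $SO(2)$. The doubling reduction for surfaces with boundary needs a sentence checking that Nielsen classes of $\phi$ on $M$ inject into those of the doubled map, but the paper only ever applies the lemma to closed surfaces of genus $\geq 2$, so this is not essential here.
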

There are  two criteria for monotonicity which we use later on.
 Let $\omega$ be an area form on $M$ and
$\phi\in\symp(M,\omega)$. 
\begin{lemma}\label{lemma:monotone1}\cite{G}
Assume that every class $\alpha\in\ker(\id-\phi_*)\subset H_{1}(M;\Z)$ is
represented by a map $\gamma:S\to\Fix(\phi)$, where $S$ is a compact oriented
1-manifold. Then $\phi$ is monotone.
\end{lemma}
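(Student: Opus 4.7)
The plan is to verify $m(\phi)=0$ in $H^1(M;\R)/\im(\id-\phi^*)$ by pairing it against its natural dual. Since $\id-\phi^*$ is the transpose of $\id-\phi_*$ under the Poincar\'e-duality pairing on $M$, the quotient $H^1(M;\R)/\im(\id-\phi^*)$ is canonically dual to $\ker(\id-\phi_*)\subset H_1(M;\R)$. It therefore suffices to show $\langle m(\phi),\alpha\rangle=0$ for every $\alpha\in\ker(\id-\phi_*)$.

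Given such an $\alpha$, I pick by hypothesis a representing cycle $\gamma:S\to\Fix(\phi)$. After a small perturbation inside $\Fix(\phi)$ I may assume $\gamma$ is a smooth immersion landing in the $1$-dimensional smooth part of $\Fix(\phi)$ (the $0$-dimensional strata contribute trivially to $H_1$). Because $\phi(\gamma(s))=\gamma(s)$, the formula
\[
\Gamma:S^1\times S\to T_\phi,\qquad(t,s)\mapsto[(t,\gamma(s))],
\]
well-defines a $2$-cycle in $T_\phi$. A direct computation from the explicit description of $d$ gives $\langle d[\theta],[\Gamma]\rangle=\bigl(\int_0^1\!\rho(t)\,\de t\bigr)\cdot\int_S\gamma^*\theta=\langle\theta,\alpha\rangle$. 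Pairing the defining identity $d\,m(\phi)=[\ophi]-(\area_\omega(M)/\chi(M))\cdot c_\phi$ with $[\Gamma]$ therefore yields
\[
\langle m(\phi),\alpha\rangle \;=\; \langle[\ophi],[\Gamma]\rangle \;-\; \frac{\area_\omega(M)}{\chi(M)}\,\langle c_\phi,[\Gamma]\rangle.
\]

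Both terms on the right vanish for geometric reasons. For $[\ophi]$: the form $\ophi$ descends from the pullback of $\omega$ along the projection $\R\times M\to M$, so it annihilates $\p_t$; since $\Gamma_*\p_s=\gamma'(s)$, one gets $\Gamma^*\ophi\equiv 0$. For $c_\phi$: differentiating $\phi\circ\gamma=\gamma$ shows $\de\phi_{\gamma(s)}\gamma'(s)=\gamma'(s)$, so the $t$-independent assignment $(t,s)\mapsto\gamma'(s)\in T_{\gamma(s)}M$ is compatible with the identification defining $V_\phi$ and yields a nowhere-vanishing section of the rank-$2$ oriented bundle $\Gamma^*V_\phi$ over each torus component of $S^1\times S$, killing its Euler class.

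The main obstacle I anticipate is the preliminary regularity step: producing an immersed smooth representative of $\alpha$ whose image sits in a $1$-dimensional smooth stratum of $\Fix(\phi)$, since $\Fix(\phi)$ need not be a smooth submanifold. Once that technical point is dispatched, the two vanishings combine to give $\langle m(\phi),\alpha\rangle=0$ for every $\alpha\in\ker(\id-\phi_*)$, hence $m(\phi)=0$ and $\phi$ is monotone.
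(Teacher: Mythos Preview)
The paper does not actually prove this lemma; it is quoted from Gautschi \cite{G} and stated without argument, so there is no in-paper proof to compare against. Your sketch is the natural one and is essentially the argument one finds in \cite{G}: build the torus $\Gamma\subset T_\phi$ over a fixed-point cycle, check that the map $d$ in the exact sequence \eqref{eq:cohomology} is realised by pairing with such tori, and then verify $\Gamma^*\ophi=0$ and $\langle c_\phi,[\Gamma]\rangle=0$.

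Two minor points. First, the duality you invoke is the ordinary Kronecker pairing between $H^1(M;\R)$ and $H_1(M;\R)$ (under which $\phi^*$ is adjoint to $\phi_*$), not Poincar\'e duality; the conclusion that $(H^1/\im(\id-\phi^*))^*\cong\ker(\id-\phi_*)$ is of course correct. Second, the regularity step you flag is indeed the only place needing care, but it is not an issue in any of the applications in this paper or in \cite{G}: there $\Fix(\phi)$ is always a submanifold (isolated points, or the components $M_{\id}$), so a smooth immersed representative in $\Fix(\phi)$ exists whenever the class $\alpha$ is nonzero, and your section $(t,s)\mapsto\gamma'(s)$ then trivialises $\Gamma^*V_\phi$. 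For a completely general $\phi\in\symp(M,\omega)$ with pathological fixed set one would need a separate argument for $\langle c_\phi,[\Gamma]\rangle=0$, but the lemma is never invoked in that generality.
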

\begin{lemma}\label{lemma:monotone2}\cite{G}
If $\phi^k$ is monotone for some $k>0$, then $\phi$ is monotone.
If $\phi$ is monotone, then $\phi^k$ is monotone for all $k>0$. 
\end{lemma}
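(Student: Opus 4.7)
The plan is to deduce both halves of the lemma from a natural $k$-fold covering map between the two mapping tori, together with the functoriality of the classes $[\omega_\phi]$ and $c_\phi$. Realise $T_\phi$ as the quotient of $\R\times M$ by the $\Z$-action generated by $(t,x)\mapsto(t+1,\phi^{-1}(x))$. The intermediate cover corresponding to the subgroup $k\Z\subset\Z$ is $(\R\times M)/k\Z$, which, after rescaling the $\R$-coordinate by $1/k$, is precisely $T_{\phi^k}$. This yields a smooth $k$-to-$1$ covering $\bar\Phi:T_{\phi^k}\To T_\phi$; viewed as a map of the fibrations over $S^1$, it is the standard $k$-fold cover on the base and the identity on each fiber $M$.

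The key observation is that both classes entering the monotonicity condition behave covariantly under $\bar\Phi$. Since fibers are identified with $M$ on both sides and a loop once around $T_{\phi^k}$ maps to a loop $k$ times around $T_\phi$, the monodromies of the vertical tangent bundles agree: the monodromy of $V_{\phi^k}$ is $d(\phi^k)=(d\phi)^k$, which is also that of $\bar\Phi^*V_\phi$. Hence $V_{\phi^k}=\bar\Phi^*V_\phi$ as oriented rank-$2$ bundles, so $c_{\phi^k}=\bar\Phi^*c_\phi$. By the same fiberwise argument, $\omega_{\phi^k}=\bar\Phi^*\omega_\phi$, since both restrict to $\omega$ on each fiber. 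Because $\area_\omega(M)$ and $\chi(M)$ do not depend on $\phi$,
\begin{equation*}
\bar\Phi^*\!\Bigl([\omega_\phi]-\tfrac{\area_\omega(M)}{\chi(M)}\,c_\phi\Bigr)=[\omega_{\phi^k}]-\tfrac{\area_\omega(M)}{\chi(M)}\,c_{\phi^k}.
\end{equation*}

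The forward implication ($\phi$ monotone $\Then$ $\phi^k$ monotone) is immediate: the left-hand side vanishes by hypothesis, hence so does the right. For the converse, we use that the pullback $\bar\Phi^*$ is injective on real cohomology, because for a finite covering of degree $k$ the composition of pullback with the transfer map equals multiplication by $k$, which is invertible over $\R$. Hence vanishing of the right-hand side forces vanishing of the left, giving monotonicity of $\phi$. The only points requiring care are the identification of $\bar\Phi$ as a covering and the naturality of the characteristic classes; both reduce to formal bookkeeping with the $\Z$-quotients of $\R\times M$, so no analytic obstacle arises.
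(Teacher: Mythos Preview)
The paper does not give a proof of this lemma; it is quoted from Gautschi \cite{G} without argument, so there is nothing in the paper to compare your approach against. Your covering-map argument is correct and gives a clean, self-contained proof. The identifications $V_{\phi^k}\cong\bar\Phi^*V_\phi$ and $\omega_{\phi^k}=\bar\Phi^*\omega_\phi$ hold because both the bundle and the form on $T_{\phi^k}$ arise as the $k\Z$-quotient of the same data on $\R\times M$ that produces $V_\phi$ and $\omega_\phi$ after the full $\Z$-quotient; the forward implication is then immediate, and the converse follows from the standard transfer argument showing that $\bar\Phi^*$ is injective on real cohomology for a finite cover.

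One small point of exposition: your justification ``since both restrict to $\omega$ on each fiber'' is not by itself sufficient to conclude $\omega_{\phi^k}=\bar\Phi^*\omega_\phi$, because a closed $2$-form on the mapping torus is not determined by its fiberwise restriction (a term of the shape $\rho\,\de t\wedge\theta$ for a closed $1$-form $\theta$ on $M$ also vanishes on fibers, and this is exactly how the image of $d$ in the sequence \eqref{eq:cohomology} arises). The correct reason, which you essentially have in hand, is that both forms are by definition the descent of the single form $\pr_M^*\omega$ from $\R\times M$, hence agree on the nose. With that clarification your proof is complete.
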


We shall say  that $\phi:M\rightarrow M$ is a  periodic map of  period $m$,
if $\phi^m$ is  the identity map $\id_M:M\rightarrow M$.

\begin{theorem}\label{thm:main}\cite{ff}
If $\phi$ is  a non-trivial orientation
preserving periodic diffeomorphism of a compact connected surface $M$
of Euler characteristic $\chi(M) \leq  0$, then $\phi$ is monotone symplectomorphism  with
respect to some $\phi$-invariant area form and
$$
\HF(\phi) \cong
\Z_2^{N (\phi)}
$$
where  $ N (\phi)$ denotes the Nielsen  number of  $\phi$.
\end{theorem}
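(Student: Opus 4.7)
The plan is to average to obtain a $\phi$-invariant area form, leverage the periodicity to prove monotonicity in one stroke, and then use Jiang's lemma to see that the Floer differential must vanish for purely grading-theoretic reasons.

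\emph{Step 1: Invariant area form and monotonicity.} Starting from any positive area form $\omega_0$ on $M$, form the averaged 2-form
\[
\omega := \tfrac{1}{m}\sum_{k=0}^{m-1}(\phi^k)^*\omega_0,
\]
where $m$ is the period of $\phi$. Since $\phi$ is orientation-preserving each summand is a positive area form, so $\omega$ is again a genuine area form, and $\phi^*\omega=\omega$ by construction. Hence $\phi\in\symp(M,\omega)$. Because $\phi^m=\id_M$, which is monotone by the Identity axiom recalled above, Lemma \ref{lemma:monotone2} yields monotonicity of $\phi$ itself.

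\emph{Step 2: Computation of $\HF(\phi)$.} Jiang's Lemma \ref{lemma:jiang} says that every fixed point class of $\phi$ is a single point of index $1$. Hence $\Fix(\phi)$ is finite and $N(\phi)=\#\Fix(\phi)$, each class being essential. I would then argue that every $y\in\Fix(\phi)$ is nondegenerate with $\sign\det(\id-\de\phi_y)=+1$: near $y$ the periodic diffeomorphism $\phi$ is smoothly conjugate to its linearization via Bochner (average a Riemannian metric), and for orientation-preserving $\phi$ the linearization is a nontrivial rotation — if it were the identity, $\phi$ would equal $\id_M$ on a neighborhood of $y$, contradicting Jiang's single-point description of the class. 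Equating Jiang's index $1$ with the local index $\sign\det(\id-\de\phi_y)$ of the nondegenerate fixed point then pins down the sign as $+1$. Under the Floer $\Z_2$-grading $(-1)^{\deg y}=\sign\det(\id-\de\phi_y)$, every generator of $\CF(\phi)$ therefore sits in the same degree. Since $\partial_J$ shifts the $\Z_2$-grading by one, it vanishes on the whole complex for dimensional reasons. Consequently
\[
\HF(\phi)\;=\;\CF(\phi)\;\cong\;\Z_2^{\#\Fix(\phi)}\;=\;\Z_2^{N(\phi)}.
\]

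\emph{Main obstacle.} The delicate step is the nondegeneracy of each fixed point, i.e.\ $\det(\id-\de\phi_y)\ne0$: without it the Floer theory from equations \eqref{eq:corbit} does not apply directly, and one has to pass through a perturbation and invariance argument. The hypotheses that $\phi$ is non-trivial and $\chi(M)\le0$ enter precisely here, to exclude the pathological possibility $\de\phi_y=\id$, so that Jiang's single-point, index-one calculation really translates into a nondegenerate positively oriented generator in the Floer chain complex. Once this is in hand, no moduli space of pseudoholomorphic curves needs to be analyzed — the grading argument finishes the computation.
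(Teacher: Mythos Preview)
Your proof is correct, and the monotonicity step matches the paper's. The argument for $\partial_J=0$, however, takes a genuinely different route. The paper observes that any Floer trajectory $u\in\M(y^-,y^+;J,\phi)$ furnishes, via the boundary condition $u(s,t)=\phi(u(s,t+1))$ in \eqref{eq:corbit}, a Nielsen equivalence between $y^-$ and $y^+$; since Lemma~\ref{lemma:jiang} places distinct fixed points in distinct Nielsen classes, $\M(y^-,y^+;J,\phi)=\emptyset$ whenever $y^-\neq y^+$, and the differential vanishes. You instead argue purely by grading: all generators lie in even degree, so $\partial_J$ has nowhere to go. Your route is slicker here and avoids any mention of moduli spaces, but it is special to the situation where every fixed point has index $+1$. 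The paper's route is the thematic one --- the observation that Floer cylinders respect Nielsen classes is exactly the bridge between Nielsen theory and Floer homology that the rest of the paper exploits (for diffeomorphisms of finite type, for pseudo-Anosov classes, and in the proposed Nielsen--Floer decomposition), and it continues to apply when fixed points carry mixed signs. As a bonus, you make the nondegeneracy of the fixed points explicit via Bochner linearization, a point the paper leaves tacit.
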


\begin{proof}
Let  $\phi$ be  a periodic diffeomorphism  of least period $l$.
 First note that
if $\tilde{\omega}$ is an area form on $M$, then area form
$\omega:=\sum_{i=1}^\ell(\phi^i)^*\tilde{\omega}$ is 
 $\phi$-invariant, i.e. $\phi\in\symp(M,\omega)$. By periodicity of  $\phi$,
 $\phi^l$ is  the identity map $\id_M:M\rightarrow M$.
Then from Lemmas \ref{lemma:monotone1}  and \ref{lemma:monotone2} it follows that $\omega$ can be chosen such that $\phi\in\symp^m(M,\omega)$.

 Lemma \ref{lemma:jiang}  implies that every
$y\in\Fix(\phi)$ forms a different fixed point class of
$\phi$, so $ \#\Fix(\phi)= N (\phi)$.
This has an immediate consequence for the Floer complex $(\CF(\phi),\p_{J})$
with respect to a generic $J=(J_t)_{t\in\R}$.
If $y^\pm\in\Fix(\phi)$ are in different fixed point classes, then
$\M(y^-,y^+;J,\phi)=\emptyset$. This follows from the first equation in
\eqref{eq:corbit}.
Then the boundary map in  the Floer complex is zero $\partial_{J}=0$ and
$\Z_2$-vector space  $
\CF(\phi) := \Z_2^{\#\Fix(\phi)}=\Z_2^{N(\phi)}$. This immediately  implies
$
\HF(\phi) \cong
\Z_2^{N (\phi)}
$ and $ \dim\HF(\phi)= N (\phi)$.

\end{proof}

\subsubsection{Algebraically finite mapping classes}

 A mapping class of $M$ is called  algebraically finite if it
does not have any pseudo-Anosov components in the sense of Thurston's
theory of surface diffeomorphism.The term algebraically finite goes back to J. Nielsen \cite{N}.\\
 In  \cite{G}  the diffeomorphisms  of finite type  were defined . These are
special representatives of algebraically finite mapping classes
adopted to the symplectic geometry. 
\begin{dfn}\label{def:ftype}\cite{G}
We call $\phi\in\diff_+(M)$ of {\bf finite type} if the following holds.
There is a $\phi$-invariant finite union $N\subset M$ of disjoint
non-contractible annuli such that:
\smallskip\\
(1) $\phi|M \setminus N$ is periodic, i.e. there exists
$\ell>0$ such that $\phi^\ell|M \setminus N=\id$.
\smallskip\\
(2) Let $N'$ be a connected component of $N$ and $\ell'>0$ be the
smallest integer such that $\phi^{\ell'}$ maps $N'$ to itself. Then
$\phi^{\ell'}|N'$ is given by one of the following two models with respect to
some coordinates $(q,p)\in I\times S^1$:
\medskip\\
\begin{minipage}{4cm}
(twist map)
\end{minipage}
\begin{minipage}{6cm}
$(q,p)\Mapsto(q,p-f(q))$
\end{minipage}
\medskip\\
\begin{minipage}{4cm}
(flip-twist map)
\end{minipage}
\begin{minipage}{6cm}
$(q,p)\Mapsto(1-q,-p-f(q))$,
\end{minipage}
\medskip\\
where $f:I\to \R$ is smooth and strictly monotone.
A twist map is called  positive or negative,
if $f$ is increasing or decreasing.
\smallskip\\
(3) Let $N'$ and $\ell'$ be as in (2).
If $\ell'=1$ and $\phi|N'$ is a twist map, then $\im(f)\subset[0,1]$,
i.e. $\phi|\text{int}(N')$ has no fixed points.
\smallskip\\
(4) If two connected components of $N$ are homotopic, then the corresponding local
models of $\phi$ are either both positive or both negative twists.

\end{dfn}
The term flip-twist map is taken from \cite{JG}.

By  $M_{\id}$ we denote the union of the components
of $M\setminus\text{int}(N)$, where $\phi$ restricts to the identity.

The next  lemma describes the set of fixed point classes of $\phi$. It
is a special case of a theorem by B.~Jiang and J.~Guo~\cite{JG}, which gives
for any mapping class a representative that realizes its
Nielsen number.
\begin{lemma}[Fixed point classes]\label{lemma:fclass}\cite{JG}
Each fixed point class of $\phi$ is either a connected component of $M_{\id}$ or
consists of a single fixed point. A fixed point $x$ of the second type satisfies
$\det(\id-\de\phi_x)>0$.
\end{lemma}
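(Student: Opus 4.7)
My plan is to decompose $\Fix(\phi)$ according to the canonical structure from Definition~1.3, analyze the Nielsen relation on each piece separately, and then glue. First I would enumerate the fixed points: (A)~every point of a connected component of $M_{\id}$; (B)~isolated fixed points in a connected component of $M\setminus\text{int}(N)$ on which $\phi^{\ell}=\id$ but $\phi$ is not the identity; and (C)~interior fixed points of an annular component $N'\subset N$ that is mapped to itself by $\phi$ (i.e.\ with $\ell'=1$). Condition~(3) of Definition~1.3 eliminates type~(C) fixed points in twist-map annuli, and on a flip-twist annulus the local model $\phi|N'(q,p)=(1-q,-p-f(q))$ has exactly two interior fixed points, at $q=1/2$. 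Fixed boundary circles of twist annuli, if present, are absorbed into adjacent components of $M_{\id}$ so do not yield new classes.

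Next I would show that each connected component $C$ of $M_{\id}$ is a single Nielsen class. For $x_0,x_1\in C$ any path $c\subset C$ from $x_0$ to $x_1$ satisfies $\phi\circ c=c$, so the homotopy $c\simeq\phi\circ c$ rel endpoints is trivial. To see that $C$ is a \emph{maximal} class, and more generally that each type~(B) and type~(C) point is its own class, one argues that a path $c$ joining two fixed points lying in distinct pieces of the decomposition must cross at least one annular component of $N$; the twist or flip-twist action on $N$, together with the periodic action on non-identity components, then obstructs the existence of the required homotopy. For two type~(B) points in the same periodic component this reduces, via the periodic structure, to Lemma~1.1.

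For the index assertion I would proceed case by case. For type~(B), Lemma~1.1 gives fixed-point index $+1$, and since $\phi$ is an orientation-preserving diffeomorphism the sign of the index equals $\sign\det(\id-d\phi_x)$, forcing $\det(\id-d\phi_x)>0$. For a type~(C) fixed point in a flip-twist annulus, a direct computation gives
\[
d\phi_x=\begin{pmatrix}-1 & 0\\ -f'(1/2) & -1\end{pmatrix},
\qquad
\det(\id-d\phi_x)=\det\begin{pmatrix}2 & 0\\ f'(1/2) & 2\end{pmatrix}=4>0.
\]

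The main obstacle is the separation-of-classes claim: showing rigorously that paths crossing the annular skeleton $N$ cannot witness the Nielsen relation between fixed points lying in different pieces of the decomposition, and in particular that the two flip-twist fixed points in a single $N'$ are also distinct as Nielsen classes. A clean implementation passes to the universal cover $\tilde M$, where the preimage of $N$ cuts $\tilde M$ into copies of the universal covers of the complementary pieces; one then analyses the action of a lift $\tilde\phi$ on the Bass--Serre tree of the graph-of-spaces decomposition induced by $N$ and shows that fixed points lying in distinct vertex or edge spaces are fixed by non-conjugate lifts of $\phi$. The twisting numbers on the annular edge spaces provide the obstruction. This is precisely the machinery developed by Jiang and Guo in~\cite{JG}.
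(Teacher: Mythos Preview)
The paper does not give its own proof of this lemma: it is stated as a citation of Jiang--Guo~\cite{JG} (``a special case of a theorem by B.~Jiang and J.~Guo''), with no argument provided. So there is nothing in the paper to compare your proposal against directly.

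That said, your outline is the right one and is essentially the strategy of~\cite{JG}: decompose $\Fix(\phi)$ according to the pieces of Definition~\ref{def:ftype}, handle the periodic pieces via Lemma~\ref{lemma:jiang}, compute the flip-twist model explicitly, and then show that paths crossing the annular skeleton $N$ cannot realise the Nielsen relation. Your identification of this last step as the genuine difficulty, and your pointer to the graph-of-spaces/tree analysis of lifts, is accurate.

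One small logical gap in your type~(B) argument: the identity $\ind(x)=\sign\det(\id-d\phi_x)$ presupposes nondegeneracy, so you cannot deduce $\det(\id-d\phi_x)>0$ from $\ind(x)=+1$ alone. The clean fix is to observe first that a nontrivial orientation-preserving periodic map is, after averaging a Riemannian metric, an isometry; at an isolated fixed point $d\phi_x$ is then a nontrivial rotation by some $\theta\neq 0$, and $\det(\id-d\phi_x)=2(1-\cos\theta)>0$ follows directly, independently of the index computation.
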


The monotonicity of diffeomorphisms of finite type
was  investigated in details in \cite{G}.
Let $\phi$ be a diffeomorphism of finite type and
$\ell$ be as in (1).
Then $\phi^\ell$ is the product of (multiple)  Dehn twists along $N$.
Moreover, two parallel Dehn twists have the same sign, by (4). We say that
$\phi$ has  uniform twists, if $\phi^\ell$ is the product of only
positive, or only negative Dehn twists.
\smallskip\\
Furthermore, we denote by $\ell$ the smallest positive integer such that
$\phi^\ell$ restricts to the identity on $M\setminus N$.

If $\omega'$ is an area form on $M$ which is the standard form
$\de q\wedge\de p$ with respect to the $(q,p)$-coordinates on $N$, then
$\omega:=\sum_{i=1}^\ell(\phi^i)^*\omega'$ is
standard on $N$ and $\phi$-invariant, i.e. $\phi\in\symp(M,\omega)$.
To prove that $\omega$ can be chosen such that $\phi\in\symp^m(M,\omega)$,
Gautschi distinguishes two cases: uniform and non-uniform twists. In the first case he proves the following stronger statement.
\begin{lemma}\label{lemma:monotone3}\cite{G}
If $\phi$ has uniform twists and $\omega$ is a $\phi$-invariant area
form, then $\phi\in\symp^m(M,\omega)$.
\end{lemma}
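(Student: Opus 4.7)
By Lemma~\ref{lemma:monotone2}, it suffices to prove that some positive power of $\phi$ is monotone with respect to the given $\phi$-invariant area form $\omega$; I will do this for $\phi^\ell$. Since $\phi^\ell$ restricts to the identity on $M\setminus\mathrm{int}(N)$, we have $\mathrm{int}(M\setminus N)\subseteq\Fix(\phi^\ell)$, so by Lemma~\ref{lemma:monotone1} it is enough to show that every class $\alpha\in\ker(\id-(\phi^\ell)_*)\subseteq H_1(M;\Z)$ can be represented by a map $\gamma\colon S\to M\setminus\mathrm{int}(N)$.

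Let $c_1,\dots,c_k$ be the core circles of the annuli making up $N$. By hypothesis $\phi^\ell$ decomposes as a product of Dehn twists along these circles, all of the same sign; write $\phi^\ell=\prod_i T_{c_i}^{n_i}$ with integers $n_i$ either all strictly positive or all strictly negative. The twists commute on homology because the $c_i$ are disjoint, and the Picard--Lefschetz formula gives
\[
(\phi^\ell)_*(\alpha)=\alpha+\sum_{i=1}^{k} n_i\,(\alpha\cdot c_i)\,[c_i],
\]
where $\cdot$ is the intersection form on $H_1(M;\Z)$. Hence $\alpha\in\ker(\id-(\phi^\ell)_*)$ is equivalent to the single relation
\[
\sum_{i=1}^{k} n_i\,(\alpha\cdot c_i)\,[c_i]=0\qquad\text{in } H_1(M;\Z).
\]

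The main obstacle is that the classes $[c_i]$ are in general not linearly independent in $H_1(M;\Z)$ (for instance three curves bounding a pair of pants give a relation), so one cannot read off vanishing of the coefficients directly. The key trick, and the only place the uniform-sign hypothesis enters, is to pair both sides with $\alpha$ using the skew-symmetry of the intersection form:
\[
0=\alpha\cdot\Bigl(\sum_{i} n_i\,(\alpha\cdot c_i)\,[c_i]\Bigr)=\sum_{i} n_i\,(\alpha\cdot c_i)^{2}.
\]
Because the $n_i$ share a common sign, the right-hand side is a definite quadratic form in the integers $\alpha\cdot c_i$, forcing $\alpha\cdot c_i=0$ for every $i$. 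With mixed signs the form would be indefinite and the argument would collapse.

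Finally, once $\alpha$ has zero algebraic intersection with every component of $N$, the long exact sequence of the pair $(M,M\setminus\mathrm{int}(N))$ together with excision (which identifies $H_1(M,M\setminus\mathrm{int}(N))\cong\bigoplus_i\Z$ with the boundary map dual to intersection with the $c_i$) shows that $\alpha$ lies in the image of $H_1(M\setminus\mathrm{int}(N);\Z)\to H_1(M;\Z)$. Thus $\alpha$ is represented by a cycle $\gamma\colon S\to M\setminus\mathrm{int}(N)\subseteq\Fix(\phi^\ell)$, and Lemma~\ref{lemma:monotone1} yields monotonicity of $\phi^\ell$, hence of $\phi$ by Lemma~\ref{lemma:monotone2}.
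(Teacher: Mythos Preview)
The paper does not give its own proof of this lemma, merely citing Gautschi~\cite{G}. Your argument is correct and is essentially the standard one: reduce to $\phi^\ell$ via Lemma~\ref{lemma:monotone2}, compute $(\phi^\ell)_*$ by the Picard--Lefschetz formula, and use the uniform-sign hypothesis to make $\sum_i n_i(\alpha\cdot c_i)^2$ a definite form, forcing $\alpha\cdot c_i=0$; the long exact sequence of the pair then pushes $\alpha$ into $M\setminus\mathrm{int}(N)\subset\Fix(\phi^\ell)$ so that Lemma~\ref{lemma:monotone1} applies.
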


In the non-uniform case, monotonicity  does not
hold for arbitrary $\phi$-invariant area forms.
\begin{lemma}\label{lemma:monotone4}\cite{G}
If $\phi$ does not have uniform twists, there exists a $\phi$-invariant
area form $\omega$ such that $\phi\in\symp^m(M,\omega)$. Moreover,
$\omega$ can be chosen such that it is the standard form $\de q\wedge\de p$ on
$N$.
\end{lemma}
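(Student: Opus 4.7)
My plan is to construct the desired area form in two stages: first produce a $\phi$-invariant area form that is standard on $N$ by averaging over $\phi$-iterates, and then tune the remaining area parameters to kill the monotonicity obstruction $m(\phi)\in H^1(M;\R)/\im(\id-\phi^*)$. Since the twist and flip-twist normal forms both preserve $\de q\wedge\de p$, for any area form $\omega'$ on $M$ which equals $\de q\wedge\de p$ on $N$ the average $\omega:=\sum_{i=1}^{\ell}(\phi^i)^*\omega'$ is $\phi$-invariant and still standard on $N$. The $p$-coordinate on each $\phi$-orbit of annuli in $N$ can be rescaled independently without leaving the finite-type normal form, which yields one positive real parameter $a_j$ per orbit; these are the parameters I will exploit.

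The second stage is to compute how $m(\phi)$ depends on the $a_j$. By the exact sequence \eqref{eq:cohomology}, the vanishing of $m(\phi)$ is equivalent, via Poincar\'e duality on $M$, to the vanishing of the pairing $\langle m(\phi),\alpha\rangle$ for every $\alpha\in\ker(\id-\phi_*)\subset H_1(M;\R)$. A Mayer--Vietoris argument for $M=(M\setminus\text{int}(N))\cup N$, combined with $\phi^\ell|_{M\setminus N}=\id$, shows that $\ker(\id-\phi_*)$ is generated by three families: $1$-cycles in $M_{\id}$, $\phi$-orbits of core circles of annular components of $N$, and surviving classes from the periodic non-identity pieces of $M\setminus N$. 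For the first family the pairing vanishes by the argument of Lemma \ref{lemma:monotone1} since such cycles lie in $\Fix(\phi)$; for the third, one replaces the representative by its $\phi$-symmetrization and an analogous argument applies. For a core-circle class in the $j$-th annular orbit, a direct local computation in the twist coordinates evaluates the pairing as $\epsilon_j a_j$ up to a universal positive factor, where $\epsilon_j\in\{\pm 1\}$ is the twist sign.

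Under the non-uniform hypothesis both signs $\epsilon_j$ occur, and by condition (4) of Definition \ref{def:ftype} parallel annuli carry the same sign. The resulting linear system in the $a_j$---encoding that each integer relation among the core-circle classes in $\ker(\id-\phi_*)$ must be killed---therefore admits a solution with all $a_j>0$, and fixing any such choice produces the required $\phi$-invariant monotone area form standard on $N$. The hardest step is the explicit local pairing computation: one must exhibit a $2$-chain in $T_\phi$ spanning an appropriate difference for each core-circle cycle and evaluate $[\omega_\phi]-(\area_\omega(M)/\chi(M))\,c_\phi$ over it. The twist and flip-twist normal forms make the integrand manageable on each annulus, but the signs in the global assembly must be tracked carefully; precisely this sign structure is what forces a positive solution to exist in the non-uniform case while it cannot exist in the uniform case, explaining why Lemma \ref{lemma:monotone3} is handled separately.
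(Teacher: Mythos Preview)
The paper itself does not prove this lemma; it is quoted from Gautschi~\cite{G} without argument, so there is no in-paper proof to compare against. Your two-stage strategy---average to produce a $\phi$-invariant form standard on $N$, then vary the remaining area parameters so that the pairing $\langle m(\phi),\alpha\rangle$ vanishes for every $\alpha\in\ker(\id-\phi_*)$---is the natural one and is, in outline, what Gautschi does.

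There is, however, a concrete error in your key local computation. For the core circle $\gamma_j=\{q=1/2\}$ of a twist annulus, the associated torus $T_{\gamma_j}\subset T_\phi$ has $\omega_\phi|_{T_{\gamma_j}}=0$ (since $q$ is constant on $\gamma_j$, so $\de q\wedge\de p$ restricts to zero) and $\langle c_\phi,[T_{\gamma_j}]\rangle=0$ (the clutching map $\de\phi|_{\gamma_j}$ is a shear, hence homotopic to the identity in $GL_2^+(\R)$). Therefore $\langle m(\phi),[\gamma_j]\rangle=0$, not $\epsilon_j a_j$. The cycles that actually detect the obstruction are those $\alpha\in\ker(\id-\phi_*)$ that \emph{cross} the annuli transversally: for such $\alpha$, any $2$-cycle $\Sigma$ in $T_\phi$ lifting $[\alpha]$ under the Wang connecting map must incorporate a $2$-chain in $M$ filling $\phi(\alpha)-\alpha$, and $\int_\Sigma\omega_\phi$ then picks up the area swept inside each annulus---this is where the twist signs $\epsilon_j$ and the area parameters genuinely enter. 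Rerunning your linear-system argument with these test cycles (and tracking the $c_\phi$ term, which carries the Euler characteristics of the pieces) does recover the sign dichotomy you describe, and the non-uniform hypothesis then yields a positive solution as you claim. So the architecture of your proof is sound, but the load-bearing computation must be relocated from core circles to transverse cycles.
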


\begin{theorem}\label{thm:main0}\cite{ff}
If $\phi$ is  a  diffeomorphism of finite type  of a compact connected surface $M$
of Euler characteristic $\chi(M) <  0$ and if  $\phi$ has  only isolated fixed points , then $\phi$ is monotone with
respect to some $\phi$-invariant area form and
$$
\HF(\phi) \cong
\Z_2^{N (\phi)}, 
$$
where  $ N (\phi)$ denotes the Nielsen  number of  $\phi$.
\end{theorem}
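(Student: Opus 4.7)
The plan is to mirror the proof of Theorem \ref{thm:main}, replacing the use of Lemma \ref{lemma:jiang} by the Jiang--Guo description of fixed point classes (Lemma \ref{lemma:fclass}) and the monotonicity argument by Lemmas \ref{lemma:monotone3} and \ref{lemma:monotone4}. First I would produce a $\phi$-invariant area form $\omega$ that makes $\phi$ monotone: split into cases according as $\phi$ has uniform twists or not. In the uniform case, start with any area form $\omega'$ standard in the $(q,p)$-coordinates on $N$ and average over the $\phi$-orbit to obtain a $\phi$-invariant form $\omega$; then Lemma \ref{lemma:monotone3} gives $\phi\in\symp^m(M,\omega)$. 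In the non-uniform case, invoke Lemma \ref{lemma:monotone4} directly to obtain such an $\omega$.

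Next I would analyze the fixed point data. By hypothesis $\Fix(\phi)$ is a finite (isolated) set. Lemma \ref{lemma:fclass} says that each fixed point class is either a whole component of $M_{\id}$ or a single point $x$ with $\det(\id-\de\phi_x)>0$. Components of $M_{\id}$ are open subsurfaces (as $\chi(M)<0$ they are non-degenerate pieces), and all their points are fixed by $\phi$; the isolated fixed point assumption therefore forces $M_{\id}=\emptyset$. Hence every fixed point class consists of exactly one point, and each such point $x$ has index $\sign\det(\id-\de\phi_x)=+1\neq 0$, so every class is essential. This gives the combinatorial identity
\begin{equation*}
\#\Fix(\phi) = N(\phi),
\end{equation*}
with grading $(-1)^{\deg y}=+1$ on every generator of $\CF(\phi)$.

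Now I would show the Floer differential vanishes. For a generic $J$, pick $y^\pm\in\Fix(\phi)$ with $\M(y^-,y^+;J,\phi)\neq\emptyset$ and let $u$ be any element. The twisted boundary condition $u(s,t)=\phi(u(s,t+1))$ in \eqref{eq:corbit} implies that the path $c(t):=u(0,t)$ for $t\in[0,1]$ runs from a lift of $y^-$ (via $s\to-\infty$) to a lift of $y^+$ (via $s\to+\infty$) and satisfies $c(0)=\phi(c(1))$; viewed on $M$ this realises a homotopy on the mapping torus showing that $y^-$ and $y^+$ lie in the same Nielsen fixed point class. Since all classes are singletons, this forces $y^-=y^+$, which rules out any contribution to $\partial_J$ on index $1$ components. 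If $\phi$ has degenerate fixed points I would use Seidel's cohomologous perturbation scheme from \cite{S} (the perturbations are supported near fixed points and do not affect the Nielsen-class argument above). Therefore $\partial_J\equiv 0$ and
\begin{equation*}
\HF(\phi)\;\cong\;\CF(\phi)\;\cong\;\Z_2^{\#\Fix(\phi)}\;=\;\Z_2^{N(\phi)}.
\end{equation*}

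The main obstacle I anticipate is the bookkeeping in the monotonicity step, in particular verifying that the averaging procedure in the uniform-twist case preserves the standard form on $N$ so that Lemma \ref{lemma:monotone3} applies, and ensuring that the perturbed action form in the possibly-degenerate case remains cohomologous to the unperturbed one so that monotonicity, and hence the compactness used to define $\partial_J$, is preserved. Everything else is a direct transcription of the periodic-diffeomorphism proof, with Lemma \ref{lemma:fclass} playing the role that Lemma \ref{lemma:jiang} played there.
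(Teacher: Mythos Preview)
Your proposal is correct and follows essentially the same route as the paper: monotonicity via Lemmas \ref{lemma:monotone3} and \ref{lemma:monotone4}, then Lemma \ref{lemma:fclass} to see that each fixed point is its own Nielsen class, whence the first equation in \eqref{eq:corbit} forces $\M(y^-,y^+;J,\phi)=\emptyset$ for $y^-\ne y^+$ and $\partial_J=0$. One small remark: your worry about degenerate fixed points is unnecessary here, since once $M_{\id}=\emptyset$ Lemma \ref{lemma:fclass} guarantees $\det(\id-\de\phi_x)>0$ at every fixed point, so all of them are already nondegenerate and no perturbation is needed.
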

\begin{proof}
From Lemmas \ref{lemma:monotone3}  and \ref{lemma:monotone4} it follows that $\omega$ can be chosen such that $\phi\in\symp^m(M,\omega)$.
 Lemma \ref{lemma:fclass}  implies that every
$y\in\Fix(\phi)$ forms a different fixed point class of
$\phi$, so $ \#\Fix(\phi)= N (\phi)$.
This has an immediate consequence for the Floer complex $(\CF(\phi),\p_{J})$
with respect to a generic $J=(J_t)_{t\in\R}$.
If $y^\pm\in\Fix(\phi)$ are in different fixed point classes, then
$\M(y^-,y^+;J,\phi)=\emptyset$. This follows from the first equation in
\eqref{eq:corbit}.
Then the boundary map in  the Floer complex is zero $\partial_{J}=0$ and
$\Z_2$-vector space  $
\CF(\phi) := \Z_2^{\#\Fix(\phi)}=\Z_2^{N(\phi)}$. This immediately  implies
$
\HF(\phi) \cong
\Z_2^{N (\phi)}
$.

\end{proof}

\begin{theorem}\label{th:ga}\cite{G}
Let  $\phi$ be  a diffeomorphism of finite type, then $\phi$ is monotone with
respect to some $\phi$-invariant area form and
$$
\HF(\phi) =
H_*(M_{\id},\p_{M_{\id}};\Z_2)\oplus
\Z_2^{L(\phi|M\setminus M_{\id})}.
$$
Here, $L$ denotes the Lefschetz number.
\end{theorem}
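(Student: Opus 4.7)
The plan is to extend the argument of Theorem \ref{thm:main0} to the general case by decomposing the Floer complex according to Nielsen (fixed point) classes and then computing the local contribution of each class separately. Monotonicity of $\phi$ with respect to a suitable $\phi$-invariant area form is immediate from Lemmas \ref{lemma:monotone3} and \ref{lemma:monotone4}, regardless of whether the twists are uniform. Thus $\HF(\phi)$ is defined and invariant within $\symp^m(M,\omega)$.

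The first key observation is that the pseudoholomorphic strip equation \eqref{eq:corbit} forces each Floer trajectory to stay within a single fixed point class: if $y^-,y^+\in\Fix(\phi)$ lie in different Nielsen classes then $\M(y^-,y^+;J,\phi)=\emptyset$, because a solution $u(s,t)$ provides a path from $y^-$ to $y^+$ realising the equivalence $c\simeq\phi\circ c$ characterising the class. Consequently, after a suitable Seidel-type perturbation, the chain complex $\CF(\phi)$ splits as a direct sum indexed by fixed point classes, and $\HF(\phi)$ is the direct sum of the local Floer homologies of these classes. By Lemma \ref{lemma:fclass}, there are two types of fixed point classes: single isolated fixed points $x$ with $\det(\id-\de\phi_x)>0$, and connected components $M'$ of $M_{\id}$.

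The isolated fixed points lie in $M\setminus M_{\id}$, all carry index $+1$, and therefore their total count equals $L(\phi|M\setminus M_{\id})$. Since each forms its own Nielsen class, each contributes one generator to $\CF(\phi)$ with no differentials in or out, giving the summand $\Z_2^{L(\phi|M\setminus M_{\id})}$. For a component $M'\subset M_{\id}$, one has a Morse--Bott situation: the whole surface-with-boundary $M'$ is a fixed point set. Following Seidel's perturbation scheme, I would choose a small time-dependent Hamiltonian perturbation supported near $M'$ whose generating function is a Morse function $h$ on $M'$. Away from $\p M'$ the analysis is the standard Floer = Morse identification, so the local complex computes $H_*(M';\Z_2)$ with some boundary condition dictated by the twist along $\p M'$. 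Using condition (3) and the sign condition (4) of Definition \ref{def:ftype}, the twist maps on the annular collar force the perturbed fixed points to lie in $\text{int}(M')$, and the incoming/outgoing direction of gradient lines at $\p M'$ is the same for every boundary circle of $M'$. A standard model computation near a twist-annulus boundary then shows that this boundary condition corresponds, on the Morse side, to requiring $h$ to be (say) constant and maximal on $\p M'$, so that the Floer homology of $M'$ equals $H_*(M',\p M';\Z_2)$.

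Summing over all components of $M_{\id}$ together with the isolated-point contribution yields the claimed formula. The main obstacle, and the one requiring most care, is the analysis of the local Floer homology along a component $M'\subset M_{\id}$: one must verify that the twist-map normal form prescribed by Definition \ref{def:ftype} really does produce the relative (rather than absolute) homology, and that Seidel's monotone perturbation framework is compatible with a Morse--Bott perturbation localised at $M'$. The sign/uniformity hypotheses (4) and (3) are exactly what is needed to ensure a consistent orientation of the collar twists so that no cancellation occurs between $\p$-components of the same $M'$; without them the identification of the summand with $H_*(M',\p M';\Z_2)$ would fail.
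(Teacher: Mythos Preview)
Your outline matches the paper's (Gautschi's) proof closely: monotonicity via Lemmas~\ref{lemma:monotone3} and \ref{lemma:monotone4}, a Nielsen-class splitting separating the isolated fixed points in $M\setminus M_{\id}$ from the components of $M_{\id}$, and a Hamiltonian (Morse--Bott) perturbation on $M_{\id}$ to identify its contribution with relative homology. The isolated-point summand $\Z_2^{L(\phi|M\setminus M_{\id})}$ is handled exactly as in the paper.

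There is, however, one step you underemphasize. After perturbing by a Morse function $H$ on $\mathrm{int}(M_{\id})$ and replacing $\phi$ by $\Phi=\phi\circ\psi_1$, all the critical points of $H$ lying in a single component $M'\subset M_{\id}$ belong to the \emph{same} Nielsen class. Hence the Nielsen argument you invoke does \emph{not} forbid a Floer cylinder between two such critical points from wandering out of $M'$ into the rest of $M$; it only rules out cylinders connecting different classes. What actually makes the local Floer complex on $M'$ coincide with the Morse complex of $H|_{M'}$ (and hence compute $H_*(M',\p M';\Z_2)$) is a separate geometric confinement: the paper invokes the result from \cite{S,G} that for $x^\pm\in\crit(H)\cap M'$ every $u\in\M(x^-,x^+;J,\Phi)$ has $\im u\subset M_\delta$, a small neighbourhood of $M_{\id}$ minus a collar. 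This ``separation mechanism for Floer connecting orbits'' is the genuinely analytic ingredient and is not a consequence of the Nielsen decomposition. Your phrase ``the analysis is the standard Floer $=$ Morse identification'' hides exactly this point; the twist-annulus normal forms in conditions (2)--(4) of Definition~\ref{def:ftype} are used to run this confinement/energy argument, not merely to determine whether one sees absolute or relative homology at the boundary.
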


\begin{proof}

The main idea of the proof  is a separation mechanism for Floer connecting
orbits.
Together with the topological separation of fixed points discussed in
theorem \ref{thm:main0} , it allows us to compute the Floer homology of
diffeomorphisms of finite type.
There exists a function $H: M\to\R$ 
such that $H|\text{int}(M_{id})$ is a Morse function,
meaning that all the critical points are non-degenerate and $H|(M\setminus M_{id})=0$. 
Let $(\psi_t)_{t\in\R}$ denote the Hamiltonian flow generated by $H$ with
respect to the fixed area form $\omega$ and set
$
\Phi:=\phi\circ\psi_1.$ Then $
\Fix\Phi = \big(\crit(H)\cap M_{id}\big)\cup\big(\Fix\phi\setminus M_{id}\big).
$
In particular, $\Phi$ only has non-degenerate fixed points.
Let $N_0\subset M_{id}$ be a collar neighborhood of
$\p M_{id}$.
Let $x^-,x^+\in\Fix\Phi\cap M_{id}$ be in the same connected component of
$M_{id}$.
If $u\in\M(x^-,x^+;J,\Phi)$, then $\im u \subset M_\delta$,
where $M_\delta$ denotes the $\delta$-neighborhood of
$M_{id}\setminus N_0$  with respect to any of the metrics $\omega(.,J_t.)$ 
\cite{S,G}
 Moreover, lemma \ref{lemma:fclass}  implies that every
$y\in\fix\phi\setminus M_{id}$ forms a different fixed point class of
$\Phi$.
This has an immediate consequence for the Floer complex $(\CF(\Phi),\p_{J})$
with respect to a generic $J=(J_t)_{t\in\R}$. Namely,
$(\CF(\Phi),\p_{J})$ splits into the subcomplexes $(\Cc_1,\p_1)$ and
$(\Cc_2,\p_2)$, where $\Cc_1$ is generated by $\crit(H)\cap M_{id}$
and $\Cc_2$ by $\fix\phi\setminus M_{id}$. Moreover, $\Cc_2$ is graded by~$0$
and $\p_2=0$ \cite{G}.The homology of $(\Cc_1,\p_1)$ is isomorphic to
$H_*(M_{id},\p_+M_{id};\Z_2)$ \cite{S,G}. So
$
\HF(\phi) \cong
H_*(M_{id},\p_+M_{id};\Z_2)\oplus
\Z_2^{\#\Fix\phi|M\setminus M_{id}}.
$
Since every fixed point of $\phi|M\setminus M_{id}$ has fixed point
index~1, the Lefschetz fixed point formula implies that
$
\#(\Fix\phi\setminus M_{id}) = L(\phi|(M\setminus M_{id})).
$

\end{proof}

\begin{rk}
 In  the  theorem \ref{thm:main0} the  set $ M_{\id}$ is empty and  every fixed point of $\phi$ has fixed point index 1 \cite{JG}.
The Lefschetz fixed point formula implies that
$\#\Fix \phi=N(\phi)=L(\phi)$ .
So, theorem \ref{thm:main0}  follows also  from theorem  \ref{th:ga}.

\end{rk}

\subsubsection{ Anosov  diffeomorphisms of 2-dimensional  torus}

The connection between Nielsen number and the dimension of symplectic Floer homology remains true for genus 1 surface  and Anosov diffeomorphism .
\begin{theorem}\cite{ff}
If $\phi$ is a  Anosov  diffeomorphism
 of a 2-dimensional torus $T^2$,
 then $\phi$ is symplectic  and 
$$
\HF(\phi) \cong
\Z_2^{N (\phi)}, 
$$
where  $ N (\phi)=|\det (E-\phi_*)|$ denotes the Nielsen  number of  $\phi$
and $\phi_*$ is an  induced homomorphism on the fundamental group of
    $T^2$.
\end{theorem}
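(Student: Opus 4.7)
The plan is to reduce to the linear hyperbolic model and then reuse the Nielsen-class separation argument already employed in the proofs of Theorems~\ref{thm:main} and~\ref{thm:main0}.

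First, every Anosov diffeomorphism of $T^2$ is isotopic in $\diff^+(T^2)$ to its linearization $A\in SL(2,\Z)$ with $|\tr A|>2$, and $A$ preserves the standard area form $\omega_0=dq\wedge dp$, so in particular is symplectic. By the Inclusion and Invariance properties of $\HF$ recalled above, Floer homology depends only on the mapping class, so it is enough to compute $\HF(A)$. Monotonicity of $A$ needs a brief separate check because $\chi(T^2)=0$: since $1$ is not an eigenvalue of $\phi_*=A$, the map $\id-A^*$ is an isomorphism on $H^1(T^2;\R)$, the first term in~\eqref{eq:cohomology} vanishes, and the pullback $r^*\colon H^2(T_A;\R)\to H^2(T^2;\R)$ is an isomorphism; since $r^* c_\phi = e(TT^2)=0$, this forces $c_\phi=0$, so the Fredholm index of Floer trajectories is constant on every $\M(y^-,y^+;J,A)$ and the analytic input for Seidel's chain complex goes through.

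Second, I would enumerate fixed points and Nielsen classes of $A$. The fixed points are the solutions of $(A-E)x\equiv 0\pmod{\Z^2}$; since $E-A$ is nonsingular there are exactly $|\det(E-A)|$ of them, and all are non-degenerate (hyperbolic). Passing to the universal cover $\R^2$, the lifts of $A$ are the affine maps $A_v(x)=Ax+v$, $v\in\Z^2$, and conjugation by a deck translation $\gamma\in\Z^2$ sends $A_v$ to $A_{v+(E-A)\gamma}$; hence the set of lifting classes is $\Z^2/(E-A)\Z^2$, which again has cardinality $|\det(E-A)|$. Every Nielsen class of $A$ therefore consists of a single fixed point; since all fixed-point indices share the same sign $\sign\det(E-A)$, every class is essential and $N(\phi)=|\det(E-\phi_*)|$.

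Third, the separation-of-classes argument applies verbatim: the first equation in~\eqref{eq:corbit} shows that the asymptotic endpoints $y^\pm$ of any Floer trajectory must lie in the same Nielsen class, and since each class contains a single fixed point we conclude $\M(y^-,y^+;J,A)=\emptyset$ for $y^-\ne y^+$, so $\p_J=0$ and
\[
\HF(A)=\CF(A)=\Z_2^{\#\Fix(A)}=\Z_2^{|\det(E-\phi_*)|}=\Z_2^{N(\phi)}.
\]
The principal obstacle is the monotonicity step, since the definition used for $\chi(M)<0$ is not literally applicable when $\chi(M)=0$; I expect the cohomological collapse sketched in the first paragraph, based on $\ker(\id-\phi^*)=0$, to supply the analytic input of monotonicity and make the rest of the argument run exactly as in the genus $\geq 2$ case.
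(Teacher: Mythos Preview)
Your argument is correct and follows essentially the same route as the paper's proof: both establish that distinct fixed points lie in distinct Nielsen classes (via the linear lift on $\R^2$), invoke the first equation in~\eqref{eq:corbit} to conclude $\M(y^-,y^+;J,\phi)=\emptyset$ for $y^-\ne y^+$, and hence $\partial_J=0$. Your monotonicity justification through the collapse of~\eqref{eq:cohomology} when $\ker(\id-\phi^*)=0$ is exactly the paper's one-line ``$H_2(M;\R)\cong H_2(T_\phi;\R)$'' observation, just spelled out more carefully. The only cosmetic difference is that you first pass to the linear representative $A$ by isotopy invariance, whereas the paper works with $\phi$ directly, tacitly identifying it with its hyperbolic linear lift; the substance of the two arguments is identical.
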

\begin{proof}
Hyperbolicity of $\phi$  means that the covering linear map
 $\tilde \phi  : R^2 \rightarrow R^2 $, $\det\tilde \phi=1$  has no eigenvalue of modulus one. The Anosov  diffeomorphism of a 2-dimensional torus $T^2$ is area preserving so symplectic.
In fact, the covering map $\tilde{\phi} $ has a unique fixed point, which is the origin;
hence, by the covering homotopy theorem , the fixed points of $\phi$ are
pairwise Nielsen  nonequivalent.The index of each Nielsen  fixed point  class, consisting of one fixed
point, coincides with its Lefschetz index, and by the hyperbolicity of $\phi$, the later
is not equal to zero.Thus the Nielsen number $ N(\phi)= \#\Fix(\phi)$.
If $y^\pm\in\Fix(\phi)$ are in different Nielsen  fixed point classes, then 
$\M(y^-,y^+;J,\phi)=\emptyset$. This follows from the first equation in
\eqref{eq:corbit}.
Monotonicity condition trivially  follows from  the isomorphism
$ H_2(M,\R)\cong H_2(T_\phi,\R)$.
Then the boundary map in  the Floer complex is zero $\partial_{J}=0$ and 
$\Z_2$-vector space  $
\CF(\phi) := \Z_2^{\#\Fix(\phi)}=\Z_2^{N(\phi)}$.

 This immediately  implies
$
\HF(\phi) \cong
\Z_2^{N (\phi)}
$ .

\end{proof}

\section{Symplectic zeta functions and asymptotic invariant} \label{sec:zeta}

\subsection{Symplectic zeta functions}
Let $ Mod_M= \pi_0(Diff^+(M))$ be the mapping class group of a closed connected oriented surface $M$ of genus $\geq 2$. Pick an everywhere positive two-form $\omega$ on $M$. A isotopy  theorem of Moser \cite{Mo} says that each  mapping class of  $g \in \Gamma$,  i.e. an isotopy class of $Diff^+(M)$,  admits representatives which preserve $\omega$. Due  to Seidel\cite{S} we can
pick  a monotone  representative $\phi\in\symp^m(M,\omega)$ of $g$.
Then $\HF(\phi)$ is an invariant of $g$, which is
denoted by $\HF(g)$. Note that $\HF(g)$ is independent of the choice of an area
form $\omega$ by Moser's  theorem  and naturality of Floer homology.
By  lemma  \ref{lemma:monotone2}  symplectomorphisms  $\phi^n$
are  also monotone for all $n>0$.
Taking a dynamical point of view,
 we consider the iterates of monotone symplectomorphism  $\phi$
and  define the first symplectic  zeta function of $\phi$ \cite{ff}
 as the following power series:
$$
 \chi_\phi(z) =
 \exp\left(\sum_{n=1}^\infty \frac{\chi(\HF(\phi^n))}{n} z^n \right),
$$
where $\chi(\HF(\phi^n))$ is the Euler characteristic of Floer homology complex
of $\phi^n$.
Then $ \chi_\phi(z)$ is an invariant of $g$, which we
denote by $ \chi_g(z)$.
Let us consider  the  Lefschetz  zeta function
 $$
  L_\phi(z) := \exp\left(\sum_{n=1}^\infty \frac{L(\phi^n)}{n} z^n \right),
$$
   where
 $
   L(\phi^n) := \sum_{k=0}^2 (-1)^k \tr\Big[\phi_{*k}^n:H_k(M;\Q)\to H_k(M;\Q)\Big]
 $
 is the Lefschetz number of $\phi^n$. 
\begin{theorem}\label{thm:lef}\cite{ff}
Symplectic zeta function $ \chi_\phi(z)$ is a rational function of $z$ and
$$
 \chi_\phi(z)= L_\phi(z)=\prod_{k=0}^2
          \det\big(I-\phi_{*k}.z\big)^{(-1)^{k+1}}.
$$
\end{theorem}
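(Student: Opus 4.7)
The plan is to reduce the statement to the classical computation of the Lefschetz zeta function in terms of the homology action, the bridge being the identity $\chi(\HF(\phi^n)) = L(\phi^n)$ for every $n \geq 1$. This is the content of the first step.

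First, I would fix $n \geq 1$ and compute $\chi(\HF(\phi^n))$ from the Floer chain complex $(\CF(\phi^n),\p_J)$. Since $\phi$ is monotone, so is $\phi^n$ by Lemma~\ref{lemma:monotone2}, so Seidel's construction gives a well-defined $\Z_2$-graded complex whose Euler characteristic is an invariant of $\phi^n$. By a $C^\infty$-small Hamiltonian perturbation (still monotone by the Inclusion/Invariance properties cited in Section~\ref{sec:floer}) I may assume all fixed points are non-degenerate; this does not change $\HF(\phi^n)$ up to canonical isomorphism. Then the grading convention $(-1)^{\deg y} = \sign\det(\id - d\phi^n_y)$ gives
\[
\chi(\HF(\phi^n)) \;=\; \chi(\CF(\phi^n)) \;=\; \sum_{y\in\Fix(\phi^n)} \sign\det(\id - d\phi^n_y),
\]
and the right-hand side is precisely $L(\phi^n)$ by the Lefschetz fixed point formula applied to $\phi^n$ (each non-degenerate fixed point has index $\sign\det(\id-d\phi^n_y)$).

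Next, summing $L(\phi^n) z^n / n$ over $n \geq 1$ and exponentiating yields, by definition of $\chi_\phi$ and $L_\phi$, the equality $\chi_\phi(z) = L_\phi(z)$. To obtain the product formula I would use the standard identity
\[
\exp\!\left(\sum_{n=1}^\infty \frac{\tr(A^n)}{n}\, z^n\right) \;=\; \det(I - zA)^{-1},
\]
valid for any endomorphism $A$ of a finite-dimensional vector space (this is immediate from $-\log\det(I-zA) = \tr(-\log(I-zA)) = \sum_n \tr(A^n) z^n/n$). Applying this to $A = \phi_{*k}$ on $H_k(M;\Q)$ for $k=0,1,2$ and multiplying the resulting identities with alternating exponents, I get
\[
L_\phi(z) \;=\; \exp\!\left(\sum_{n=1}^\infty \sum_{k=0}^2 (-1)^k \frac{\tr(\phi_{*k}^n)}{n}\, z^n\right) \;=\; \prod_{k=0}^2 \det(I - \phi_{*k}\, z)^{(-1)^{k+1}},
\]
which is manifestly a rational function of $z$.

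The one step that deserves care is the justification of $\chi(\HF(\phi^n)) = L(\phi^n)$ in the presence of degenerate fixed points: one must know that Seidel's perturbation leaves $\HF(\phi^n)$, hence its Euler characteristic, unchanged, and that the perturbed diffeomorphism is isotopic to $\phi^n$ so has the same Lefschetz number. Both are standard (invariance of $\HF$ under Hamiltonian isotopies, homotopy invariance of $L$), so the main work is bookkeeping rather than a new argument. The combinatorial identity producing the $\det(I-\phi_{*k} z)^{(-1)^{k+1}}$ factors is purely formal and presents no obstacle.
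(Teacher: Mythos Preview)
Your proposal is correct and follows essentially the same approach as the paper: establish $\chi(\HF(\phi^n))=L(\phi^n)$ via the grading convention and the Lefschetz fixed point formula in the non-degenerate case, handle degenerate fixed points by Hamiltonian perturbation (invoking Seidel's invariance and the results of Dostoglou--Salamon), and conclude. You are simply more explicit than the paper in spelling out the trace--determinant identity that yields the product formula for $L_\phi(z)$, which the paper takes for granted.
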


\begin{proof} 
If for every $n$ all the fixed points of $\phi^n$ are non-degenerate, i.e. for all $x\in\Fix(\phi^n)$, $\det(\id-\de\phi^n(x))\ne0$, then we have( see section
\ref{sec:floer}): 
$\chi(\HF(\phi^n))=\sum_{x=\phi^n(x)} \sign(\det(\id-\de\phi^n(x)))=L(\phi^n).
$
If we have degenerate fixed points one needs   to perturb equations
\eqref{eq:corbit} in order to define the Floer homology.
 The necessary analysis is given in \cite{S}  is essentially  the same as in the slightly different situations considered in \cite{ds}, where the  above connection between the Euler characteristic and the Lefschetz number
 was firstly  established.
\end{proof}

In \cite{ff} we have  defined  the second symplectic zeta function for 
 monotone symplectomorphism  $\phi$  as the following power series:
$$
 F_\phi(z) = 
 \exp\left(\sum_{n=1}^\infty \frac{\dim\HF(\phi^n)}{n} z^n \right).
$$
Then $ F_\phi(z)$ is an invariant of mapping class  $g$, which we
denote by $ F_g(z)$.

Motivation for this definition was  the  theorem \ref{thm:main} and nice analytical
properties of the Nielsen zeta function $$N_\phi(z) =
 \exp\left(\sum_{n=1}^\infty \frac{N(\phi^n)}{n} z^n \right)$$, see \cite{fv,fl,pf,FelshB}. 
We denote the  numbers  $\dim\HF(\phi^n) $ by $N_n$. Let $ \mu(d), d \in \N$,
be the M\"obius function.

 \begin{theorem}\cite{ff}
Let $\phi$ be  a non-trivial orientation
preserving periodic diffeomorphism  of least period $m$ of a compact connected surface $M$
of Euler characteristic $\chi(M) < 0$  . Then the
  zeta function $ F_\phi(z)$ is  a radical of a rational function and 
$$
 F_\phi(z) =\prod_{d\mid m}\sqrt[d]{(1-z^d)^{-P(d)}},
$$
 where the product is taken over all divisors $d$ of the period $m$, and $P(d)$ is the integer
$  P(d) = \sum_{d_1\mid d} \mu(d_1)N_{d/ d_1} .  $
\end{theorem}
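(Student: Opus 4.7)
My plan is to convert the series in the definition of $F_\phi(z)$ into a product by Möbius inversion, and then use the periodic structure of $\phi$ to restrict the product to divisors of $m$.

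First I would analyze the sequence $N_n := \dim\HF(\phi^n)$. Since $\phi$ has period $m$, the $\phi$-period of any point divides $m$, so a point is fixed by $\phi^k$ iff its period divides $\gcd(k,m)$; hence $\Fix(\phi^k) = \Fix(\phi^{\gcd(k,m)})$. When $m \nmid k$, the iterate $\phi^k$ is a non-trivial orientation-preserving periodic diffeomorphism, so Lemma~\ref{lemma:jiang} gives $N(\phi^k) = \#\Fix(\phi^k)$ and Theorem~\ref{thm:main} gives $N_k = N(\phi^k)$. When $m \mid k$, $\phi^k = \id_M$ and the Floer-homology property $\HF(\id_M) \cong H_*(M;\Z_2)$ yields $N_k = N_m$. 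Combining, $N_n = N_{\gcd(n,m)}$ for all $n \geq 1$.

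Second, I would invoke Möbius inversion: the formula $P(d) = \sum_{d_1\mid d}\mu(d_1)N_{d/d_1}$ rearranges to $N_n = \sum_{d\mid n}P(d)$. Substituting and interchanging sums,
\begin{equation*}
\ln F_\phi(z) = \sum_{n \geq 1}\frac{N_n}{n}z^n = \sum_{d \geq 1} P(d) \sum_{k \geq 1}\frac{z^{dk}}{dk} = -\sum_{d \geq 1}\frac{P(d)}{d}\ln(1-z^d),
\end{equation*}
giving $F_\phi(z) = \prod_{d \geq 1} \sqrt[d]{(1-z^d)^{-P(d)}}$.

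The main obstacle, and the step I would tackle most carefully, is showing $P(d) = 0$ whenever $d \nmid m$. Given such $d$, I would pick a prime $p$ with $v_p(d) > v_p(m)$ and write each divisor $d_1 \mid d$ as $d_1 = p^i d_1'$ with $p \nmid d_1'$. Only $i \in \{0,1\}$ contribute to the sum defining $P(d)$ since $\mu$ vanishes on non-squarefree integers. For such $i$ one has $v_p(d/d_1) \geq v_p(d) - 1 \geq v_p(m)$, so $v_p(\gcd(d/d_1, m)) = v_p(m)$, and the non-$p$-part of $\gcd(d/d_1, m)$ depends only on $d_1'$. By the first step, $N_{d/d_1} = N_{\gcd(d/d_1,m)}$ is therefore equal for $i=0$ and $i=1$, so for each $d_1'$ the $i$-sum yields the factor $\mu(1)+\mu(p)=0$ and $P(d)=0$. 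Restricting the product to $d \mid m$ then gives the claimed formula, with rationality under the radicals immediate from finiteness of the product.
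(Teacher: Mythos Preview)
Your argument is correct. The paper itself does not supply a proof of this theorem---it is quoted from \cite{ff} (and ultimately from the author's earlier work \cite{pf,FelshB} on Nielsen zeta functions of periodic maps)---but your proof is exactly the standard one that underlies that reference: establish the periodicity $N_n=N_{\gcd(n,m)}$ via Theorem~\ref{thm:main} (together with $\HF(\id_M)\cong H_*(M;\Z_2)$ for the case $m\mid n$), apply M\"obius inversion to rewrite the logarithm of $F_\phi(z)$ as a sum over $d$, and then use the $p$-adic pairing argument to kill $P(d)$ for $d\nmid m$. Nothing is missing.
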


\begin{rk}
Given a symplectomorphism $\phi$ of surface $M$, one can form
the symplectic mapping torus
$M^4_{\phi}=T^3_{\phi}\rtimes S^1$, where $T^3_{\phi}$ is  usual mapping torus
.
Ionel and Parker \cite{IP} have computed the degree zero Gromov invariants
\cite{MS1}(these are built from the invariants of Ruan and Tian)  of
$M^4_{\phi}$ and of fiber sums of the $M^4_{\phi}$ with other symplectic manifolds. This is done by expressing the Gromov invariants in terms of the
Lefschetz zeta function $ L_\phi(z)$ \cite{IP}. The result is a large set of interesting non-Kahler
symplectic manifolds with computational ways of distinguishing them. In dimension four this gives a symplectic construction of the exotic elliptic surfaces of Fintushel and Stern \cite{FS}. This construction arises from knots.
Associated to each fibered knot $K$ in $S^3$ is a Riemann surface $M$ and a
monodromy diffeomorphism $f_K$ of $M$. Taking $\phi=f_K$ gives symplectic
4-manifolds ${M^4}_\phi(K)$ with Gromov invariant
$Gr({M^4}_\phi(K))= A_K(t)/(1-t)^2=L_\phi(t)$, where $ A_K(t)$ is the Alexander polynomial of knot $K$. Next, let $E^4(n)$ be the simply-connected minimal elliptic surface with fiber $F$ and canonicla divisor $k=(n-2)F$. Forming the fiber sum $E^4(n,K)=E^4(n)\#_{(F=T^2)}{M^4}_\phi(K)$ we obtain a
symplectic manifold homeomorphic to $E^4(n)$.
Then for $n\geq 2$ the Gromov and Seiberg-Witten invariants of $E^4(K)$
are $ Gr(E^4(n,K))=SW(E^4(n,K))=A_K(t)(1-t)^{n-2}$ \cite{FS, IP}. 
Thus fibered knots with distinct Alexander polynomials give rise to symplectic manifolds $E^4(n,K)$ which are homeomorphic but not diffeomorphic. In particular, there are infinitely many distinct symplectic 4-manifolds homeomorphic
to $E^4(n)$ \cite{FS} .

In higher dimensions it gives many examples of manifolds which are diffeomorphic but not equivalent as symplectic manifolds.
  Theorem \ref{thm:lef} implies that the Gromov invariants of $M^4_{\phi}$ are related to symplectic Floer homology  of $\phi $ via zeta
function
 $\chi_\phi(z)= L_\phi(z)$. We hope that  the second symplectic zeta function $
 F_\phi(z)$ give rise to a new invariant of symplectic 4-manifolds \cite{FS}.

\end{rk}

\subsection{Topological entropy and the Nielsen numbers}

   A basic relation between Nielsen numbers and topological entropy $h(f)$ \cite{KH} was found by N. Ivanov
   \cite{i1}. We present here a very short proof of Jiang  of the Ivanov's  inequality.
\begin{lemma}\label{lemma:ent}\cite{i1}
$$
h(f) \geq \limsup_{n} \frac{1}{n}\cdot\log N(f^n)
 $$
\end{lemma}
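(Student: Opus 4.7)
The plan is to prove Ivanov's inequality through the $(n,\epsilon)$-separated set characterization of topological entropy. Recall that for a continuous selfmap $f$ of a compact metric space $(X,d)$, if $s(n,\epsilon,f)$ denotes the maximal cardinality of a subset $S\subset X$ with $\max_{0\leq i<n} d(f^i(x),f^i(y))\geq \epsilon$ for every pair of distinct points $x,y\in S$, then $h(f)=\lim_{\epsilon\to 0}\limsup_{n}\frac{1}{n}\log s(n,\epsilon,f)$. The goal will be to manufacture, for a suitable $\epsilon>0$ independent of $n$, an $(n,\epsilon)$-separated set of cardinality $N(f^n)$.

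First, I would exploit the fact that $X$ is a compact polyhedron (and hence a compact ANR) to fix $\epsilon>0$ small enough that any two points at distance less than $\epsilon$ are joined by a path contained in an $\epsilon$-ball, and any two such short paths sharing endpoints are homotopic rel endpoints. By the compactness of $X$, a uniform such $\epsilon$ exists. Moreover, since $f$ is continuous on the compact space $X$, one can in fact choose $\epsilon$ so that $f$ maps each $\epsilon$-short path to another short path.

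Next, the heart of the argument is the following claim: if $x,y\in\Fix(f^n)$ satisfy $d(f^i(x),f^i(y))<\epsilon$ for all $0\leq i\leq n$, then $x$ and $y$ lie in the same Nielsen fixed point class of $f^n$. To see this, choose short paths $p_i$ from $f^i(x)$ to $f^i(y)$, with $p_0=p_n$ (legitimate since $f^n(x)=x$ and $f^n(y)=y$). Each pair $f\circ p_i$ and $p_{i+1}$ consists of short paths with the same endpoints, hence are homotopic rel endpoints by the choice of $\epsilon$. Concatenating these homotopies yields $f^n\circ p_0\simeq p_0$ rel endpoints, so $p_0$ witnesses Nielsen equivalence of $x$ and $y$ as fixed points of $f^n$. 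Taking the contrapositive, choosing one representative $x_\alpha$ from each essential Nielsen class of $f^n$ produces a set of cardinality $N(f^n)$ in which any two distinct points fail the Nielsen equivalence condition, hence also the uniform $\epsilon$-closeness condition along their orbits; thus this set is $(n,\epsilon)$-separated, giving $s(n,\epsilon,f)\geq N(f^n)$.

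Finally, applying $\frac{1}{n}\log$ and $\limsup_{n\to\infty}$ to both sides yields $\limsup_n\frac{1}{n}\log s(n,\epsilon,f)\geq \limsup_n\frac{1}{n}\log N(f^n)$, and since the left side is bounded above by $h(f)$ for every $\epsilon>0$, the inequality follows. The main obstacle is the uniform choice of $\epsilon$ with the homotopy-lifting-of-short-paths property; this is where compactness of $X$ and its ANR structure enter essentially, and it must be arranged so that the same $\epsilon$ works simultaneously for $f$, $f^2,\ldots,f^n$ for all $n$, i.e., the homotopy $f\circ p_i\simeq p_{i+1}$ must stay within controllable neighborhoods.
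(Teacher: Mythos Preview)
Your proposal is correct and follows essentially the same route as the paper's proof (due to Jiang): pick one point from each essential fixed point class of $f^n$ and show this set is $(n,\epsilon)$-separated by arguing that failure of separation forces Nielsen equivalence via short paths $p_i$ with $f\circ p_i\simeq p_{i+1}$.

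One remark: the worry you raise at the end---that the same $\epsilon$ must work for all iterates $f,f^2,\ldots,f^n$---is misplaced. The paper handles this more cleanly by using two scales: first a $\delta>0$ such that every loop of diameter $<2\delta$ is contractible, then an $\epsilon>0$ (from uniform continuity of $f$ alone) such that $d(x,y)<2\epsilon$ implies $d(f(x),f(y))<\delta$. The homotopy $f\circ c_i\simeq c_{i+1}$ then holds because $f\circ c_i$ has diameter $<2\delta$ and $c_{i+1}$ has diameter $<2\epsilon<2\delta$, so their concatenation is a loop of diameter $<2\delta$. No control on higher iterates is needed since the chain of homotopies is built one step of $f$ at a time.
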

\begin{proof}
 Let $\delta$ be such that every loop in $X$ of diameter $ < 2\delta $ is contractible.
 Let $\epsilon >0$ be a smaller number such that $d(f(x),f(y)) < \delta $ whenever $ d(x,y)<2\epsilon $. Let $E_n \subset X $ be a set consisting of one point from each essential fixed point class of $f^n$. Thus $ \mid E_n \mid =N(f^n) $. By the definition of $h(f)$, it suffices
 to show that $E_n$ is $(n,\epsilon)$-separated.
 Suppose it is not so. Then there would be two points $x\not=y \in E_n$ such that $ d(f^i(x), f^i(y)) \leq \epsilon$ for $o\leq i< n$ hence for all $i\geq 0$. Pick a path $c_i$ from $f^i(x)$ to
 $f^i(y)$ of diameter $< 2\epsilon$ for $ o\leq i< n$ and let $c_n=c_0$. By the choice of $\delta$
 and $\epsilon$ ,  $f\circ c_i \simeq c_{i+1} $ for all $i$, so $f^n\circ c_0\simeq c_n=c_0$. such that
 This means $x,y$ in the same fixed point class of $f^n$, contradicting the construction of $E_n$.

\end{proof}

 This inequality is remarkable in that it does not require smoothness of the map and provides a common lower bound for the topological entropy of all maps in a homotopy class.

We recall Thurston classification theorem for homeomorphisms of surfase $M$
of genus $\geq 2$.

\begin{theorem}\label{thm:thur}\cite{Th}
Every homeomorphism $\phi: M\rightarrow M $ is isotopic to a homeomorphism $f$
such that either\\
(1) $f$ is a periodic map; or\\
(2) $f$ is a pseudo-Anosov map, i.e. there is a number $\lambda >1$(stretching factor) and a pair of transverse measured foliations $(F^s,\mu^s)$ and $(F^u,\mu^u)$ such that $f(F^s,\mu^s)=(F^s,\frac{1}{\lambda}\mu^s)$ and $f(F^u,\mu^u)=(F^u,\lambda\mu^u)$; or\\
(3)$f$ is reducible map, i.e. there is a system of disjoint simple closed curves $\gamma=\{\gamma_1,......,\gamma_k\}$ in $int M$ such that $\gamma$ is invariant by $f$(but $\gamma_i$ may be permuted) and $ \gamma$ has a $f$-invariant
tubular neighborhood $U$  such that each component of $M\setminus U$  has negative
Euler characteristic and on each(not necessarily connected) $f$-component of
$M\setminus U$, $f$ satisfies (1) or (2).

\end{theorem}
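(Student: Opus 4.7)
The plan is to follow Thurston's original strategy: let the mapping class $[\phi]$ act on the Thurston compactification of Teichm\"uller space, apply a fixed-point theorem, and read off the trichotomy from which stratum of the compactification is fixed. Let $\mathcal{T}(M)$ denote Teichm\"uller space, homeomorphic to an open ball of real dimension $6g-6$, and let $\mathcal{T}(M)\cup \mathcal{PMF}(M)$ be its Thurston compactification, where $\mathcal{PMF}(M)$ is the sphere of projective measured foliation classes. The mapping class group acts continuously by homeomorphisms on this closed ball, so Brouwer's theorem produces a fixed point $\xi$ of $[\phi]$.

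If $\xi\in\mathcal{T}(M)$, then $[\phi]$ fixes a point of Teichm\"uller space, hence (by Kerckhoff's Nielsen realization for a single cyclic group, which is the easy case) is represented by an isometry of a hyperbolic structure on $M$; such an isometry has finite order, giving case~(1). If $\xi\in\mathcal{PMF}(M)$, write $\xi=[F,\mu]$, so that $\phi(F,\mu)=(F,\lambda\mu)$ for some $\lambda>0$ after choosing a representative homeomorphism.

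The dichotomy inside case~$\xi\in\mathcal{PMF}(M)$ is driven by the geometry of $F$. If $F$ has a closed leaf, or splits into minimal components whose boundary circles are essential, collect those circles and isotope them to disjoint simple closed curves $\gamma_1,\dots,\gamma_k$; passing to an iterate if necessary to tame the permutation action and then going back to $\phi$ shows that $\gamma=\bigcup\gamma_i$ is $\phi$-invariant as a set. Restricting to the complementary pieces and iterating the classification, one arranges that on each component of a tubular neighborhood complement $f$ satisfies (1) or (2), giving case~(3). If instead $F$ is arational (no closed leaves, every half-leaf dense), one constructs a transverse $\phi$-invariant measured foliation $(F^u,\mu^u)$ by applying the Perron--Frobenius theorem to the incidence matrix of a train track carrying $F$, or equivalently by a second fixed-point argument for $\phi^{-1}$ on $\mathcal{PMF}(M)$. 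Compatibility of the two stretch factors forces them to be $\lambda$ and $\lambda^{-1}$; arationality combined with the Poincar\'e recurrence of $\phi$ forces $\lambda\neq 1$, and after possibly replacing $\phi$ by $\phi^{-1}$ one has $\lambda>1$, yielding the pseudo-Anosov case~(2).

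The principal obstacles are two. First, one must establish the existence and transversality of the unstable foliation $(F^u,\mu^u)$ in the arational subcase; this is the technical heart of Thurston's theorem, requiring the train track / measured lamination machinery and a careful non-degeneracy argument to rule out $\lambda=1$. Second, in the reducible subcase one must upgrade a $\phi$-invariant lamination-theoretic object to a genuine $\phi$-invariant system of disjoint simple closed curves with a tubular neighborhood whose complementary pieces are individually of type (1) or (2); this is handled by extracting a canonical (Handel--Thurston) reduction system and invoking the classification inductively on complexity, with the Euler characteristic bound $\chi<0$ on complementary subsurfaces ensuring the induction terminates.
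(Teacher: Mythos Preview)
The paper does not prove this theorem; it is quoted from Thurston~\cite{Th} (see also the exposition in~\cite{flp}) and used as background for the discussion of topological entropy and the asymptotic invariant. There is therefore no paper proof to compare against.

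Your sketch is a faithful outline of Thurston's original argument via the compactification $\overline{\mathcal{T}(M)}=\mathcal{T}(M)\cup\mathcal{PMF}(M)$ and Brouwer's fixed-point theorem, and you have correctly identified the two genuine technical obstacles (construction and transversality of the unstable foliation in the arational case, and extraction of a canonical reduction system in the reducible case). Two small refinements: first, when the fixed point lies in $\mathcal{T}(M)$ you do not need to invoke Kerckhoff --- the stabilizer of a point of Teichm\"uller space in the mapping class group is the isometry group of the corresponding hyperbolic surface, which is finite for $\chi(M)<0$, so the cyclic case is elementary. Second, the dichotomy within $\mathcal{PMF}(M)$ is more precisely phrased in terms of whether the fixed foliation is \emph{filling} (every component of the complement of its support is a disc) rather than merely arational; a non-filling fixed foliation yields the reduction system, while a filling one forces the pseudo-Anosov conclusion. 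With those adjustments your outline is correct, though of course each of the two obstacles you flag requires substantial work (as in~\cite{flp}) to make rigorous.
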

The map $f$ above is called a  Thurston canonical form of $\phi$. In (3) it
can be chosen so that some iterate $f^m$ is a generalised Dehn twist on $U$.
Such a $f$ , as well as the $f$ in (1) or (2), will be called standard.
A key observation is that if $f$ is standard, so are all iterates of $f$.

\begin{lemma}\label{lem:flp}\cite{flp} Let $f$ be a pseudo-Anosov  homeomorphism with stretching factor $\lambda >1$ of surfase $M$
of genus $\geq 2$. Then
$$h(f)=log(\lambda)= \limsup_{n} \frac{1}{n}\cdot\log N(f^n)$$
\end{lemma}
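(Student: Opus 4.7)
The plan is to establish the chain $\limsup_n \frac{1}{n}\log N(f^n) \leq h(f) = \log\lambda \leq \limsup_n \frac{1}{n}\log N(f^n)$. Ivanov's Lemma~\ref{lemma:ent} gives the first inequality for free, so it remains to prove (a) $h(f) \leq \log\lambda$ and (b) $\limsup_n \frac{1}{n}\log N(f^n) \geq \log\lambda$; combined, these force all three quantities to coincide.

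For (a), I would use a Markov partition $\{R_1,\dots,R_k\}$ for $f$, where each $R_i$ is a rectangle with sides along leaves of $F^s$ and $F^u$. Since $f$ stretches the $F^u$-direction by $\lambda$ and contracts the $F^s$-direction by $\lambda^{-1}$, the dynamics on admissible itineraries is conjugate to a subshift of finite type whose transition matrix $A$ has spectral radius $\lambda$, and standard symbolic-dynamics arguments yield $h(f) = \log\lambda$. Alternatively one argues directly: two points whose forward $n$-orbits remain $\epsilon$-close must lie in a common rectangle of $\mu^s\times\mu^u$-area $O(\epsilon^2\lambda^{-n})$, so the maximal $(n,\epsilon)$-separated set has cardinality $O(\lambda^n)$, giving $h(f) \leq \log\lambda$.

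For (b), I would count fixed points of iterates and show each lies in its own essential Nielsen class. Regular fixed points of $f^n$ are in bijection with periodic itineraries of period $n$ in the Markov partition, hence with fixed points of $A^n$ acting on admissible sequences; their number grows as $\mathrm{tr}(A^n) \sim \lambda^n$, with only a bounded correction coming from the finitely many singularities of the foliations. At a regular fixed point, the local hyperbolic model gives fixed point index $\pm 1$ (never zero), so the point is essential. To see that two distinct fixed points $x,y$ of $f^n$ lie in different Nielsen classes, note that a path $c$ from $x$ to $y$ with $c \simeq f^n\circ c$ lifts to a path in the universal cover joining two fixed points of a single lift $\tilde{f}^n$; but the uniform expansion of $\tilde{f}^n$ along lifted unstable leaves (together with contraction along stable leaves) forces each lift to have at most one fixed point. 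Hence $N(f^n) = \#\Fix(f^n) \sim \lambda^n$, and in particular $\limsup_n \frac{1}{n}\log N(f^n) \geq \log\lambda$.

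The main obstacle I expect is the Nielsen-class separation in step (b) at the singularities of the invariant foliations. While the exponential count of regular periodic points is classical from the symbolic dynamics of the Markov partition, the singular fixed points (prong-type saddles) require a separate analysis: one must verify that the lifting-uniqueness argument for $\tilde{f}^n$ extends to singular fixed points and that the sum of their indices is nonzero generically. Since the number of singularities is bounded independently of $n$, any such issues contribute only an $O(1)$ error to $N(f^n)$, which does not affect the asymptotic rate $\log\lambda$; handling this carefully is the subtle point of the argument and is where one invokes the detailed results of \cite{flp}.
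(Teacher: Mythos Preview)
The paper does not give its own proof of this lemma: it is stated with a citation to \cite{flp} and immediately followed by Lemma~\ref{lem:j1}, so there is nothing in the paper to compare your argument against line by line. Your sketch is the standard route one finds in \cite{flp}, \cite{Th}, \cite{I}, and \cite{j1}: Ivanov's inequality (Lemma~\ref{lemma:ent}) for the upper bound on the Nielsen growth, the Markov partition / subshift description of a pseudo-Anosov map to get $h(f)=\log\lambda$ and the asymptotic $\#\Fix(f^n)\sim\lambda^n$, and the lifting argument (each lift of $f^n$ to the universal cover has at most one fixed point) to conclude that distinct fixed points lie in distinct Nielsen classes, so $N(f^n)=\#\Fix(f^n)$.

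One remark on your stated ``main obstacle'': the singular fixed points are not actually a delicate issue for the Nielsen count. For a pseudo-Anosov map every fixed point, singular or not, is isolated with nonzero index (a $p$-prong singularity has index $1-p\neq 0$), and the lifting-uniqueness argument works uniformly since the lifted foliations on $\tilde M$ have no singularities (genus $\geq 2$ implies $\tilde M\cong\R^2$ and the lifts of $F^s,F^u$ are honest transverse foliations by lines). Thus $N(f^n)=\#\Fix(f^n)$ exactly, not just up to $O(1)$; the paper alludes to this in Section~\ref{sec:problem} when it says that for a pseudo-Anosov map ``the Nielsen number \dots\ equals the number of fixed points'' with reference to \cite{Th, JG, I}. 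So your worry is harmless, but you can in fact drop the hedging.
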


\begin{lemma}\label{lem:j1}\cite{j1}
Suppose $f$ is a standard homeomorphism  of surfase $M$
of genus $\geq 2$ and  $\lambda$ is the largest stretching factor  of the  pseudo-Anosov pieces( $\lambda=1$ if there is no pseudo-Anosov piece).
Then
$$h(f)=log(\lambda)= \limsup_{n} \frac{1}{n}\cdot\log N(f^n)$$
\end{lemma}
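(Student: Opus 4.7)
The plan is to combine the Thurston decomposition with Ivanov's inequality (Lemma \ref{lemma:ent}) and the pure pseudo-Anosov case (Lemma \ref{lem:flp}). The key observation, already noted just before the lemma, is that iterates of a standard map are again standard, so the canonical decomposition of Theorem \ref{thm:thur} applies uniformly to every $f^n$.

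First I would establish the equality $h(f)=\log\lambda$. Write $M=\bigl(\bigcup S_i\bigr)\cup U$, where the $S_i$ are the components of $M\setminus U$ and $U$ is the $f$-invariant tubular neighborhood of the reducing curves. After passing to a power $f^\ell$, each $S_i$ is preserved and $f^\ell|_{S_i}$ is either the identity, a (non-trivial) periodic homeomorphism, or a pseudo-Anosov map with stretching factor $\lambda_i^\ell$; on each component of $U$ the map $f^\ell$ is a generalized Dehn twist. Topological entropy is additive under the partition into closed invariant pieces (using that $h(f^\ell)=\ell\cdot h(f)$), and one checks that periodic pieces and Dehn-twist annuli contribute zero entropy, while each pseudo-Anosov piece contributes $\log\lambda_i$ by Lemma \ref{lem:flp}. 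Taking the maximum yields $h(f)=\log\lambda$.

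Next I would prove the lower bound $\limsup_n \tfrac{1}{n}\log N(f^n)\geq\log\lambda$. Choose a pseudo-Anosov component $S$ realizing $\lambda$, and let $m$ be such that $f^m(S)=S$ and $f^m|_S$ is pseudo-Anosov with stretching factor $\lambda^m$. Lemma \ref{lem:flp} applied to $f^m|_S$ gives $\limsup_k \tfrac{1}{mk}\log N\bigl((f^m|_S)^k\bigr)=\log\lambda$. The point then is that essential fixed point classes of $(f^{mk})|_S$ remain essential and pairwise Nielsen-inequivalent when viewed on $M$: any Nielsen path joining two such fixed points in $M$ would have to stay in $S$ up to homotopy, because the canonical form separates Nielsen classes across distinct pieces of the decomposition (this is exactly the separation property that underlies the Jiang--Guo realization of the Nielsen number for standard maps, cited as Lemma \ref{lemma:fclass}). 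Granting this injection of fixed-point classes, $N(f^{mk})\geq N\bigl((f^m|_S)^k\bigr)$, giving the required lower bound.

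Finally, Ivanov's inequality (Lemma \ref{lemma:ent}) provides $h(f)\geq\limsup_n \tfrac{1}{n}\log N(f^n)$, and chaining gives
\[
\log\lambda \;=\; h(f) \;\geq\; \limsup_n \tfrac{1}{n}\log N(f^n) \;\geq\; \log\lambda,
\]
so all three quantities coincide. The main obstacle is the fixed-point-class separation step in the middle paragraph: one must verify that Nielsen classes on an individual piece $S$ inject into Nielsen classes of the global map, which relies on the fact that the reducing curves together with their $f$-invariant annular neighborhoods provide a $\pi_1$-level obstruction to homotoping paths across pieces. The periodic-and-Dehn-twist case $\lambda=1$ is handled separately but easily, since then $N(f^n)$ grows at most polynomially and $h(f)=0$.
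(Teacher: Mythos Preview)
The paper does not give its own proof of this lemma; it is simply cited from Jiang~\cite{j1}. Your outline is essentially Jiang's argument: establish $h(f)=\log\lambda$ from the entropy of the pieces, get the upper bound on the Nielsen growth from Ivanov's inequality, and get the lower bound by tracking the Nielsen classes of the dominant pseudo-Anosov piece.

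One point deserves sharpening. Your claim that essential fixed point classes of $(f^{mk})|_S$ \emph{inject} into the Nielsen classes of $f^{mk}$ on $M$ is slightly too strong as stated: two classes on a piece can, in principle, merge in $M$ via a path that leaves $S$ and returns through another piece, and a class can also change index through boundary contributions. What Jiang actually uses (via the Jiang--Guo analysis of standard maps) is that the discrepancy between $N(f^n)$ and the sum of the Nielsen numbers on the pieces is bounded by a constant depending only on the number of reducing curves, uniformly in $n$. That bounded-defect statement is exactly what you need for the $\limsup$ and is what the paper invokes elsewhere (see the estimate $N(\phi)\le\sum_j N(\phi_j)+2k$ in Example~3.8). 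With this correction your argument goes through; the appeal to Lemma~\ref{lemma:fclass} is not quite the right citation, since that lemma concerns finite-type maps rather than the general standard case---the relevant input is the full Jiang--Guo structure theorem~\cite{JG}.
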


\subsection{Asymptotic invariant}

The growth rate of a sequence $a_n$ of complex numbers is defined
by
 $$
 \grow ( a_n):= max \{1,  \limsup_{n \rightarrow \infty} |a_n|^{1/n}\}
$$

which could be infinity. Note that $\grow(a_n) \geq 1$ even if all $a_n =0$.
When  $\grow(a_n) > 1$, we say that the sequence $a_n$ grows exponentially.
\begin{dfn}
 We define the asymptotic invariant $ F^{\infty}(g)$ of  mapping class   $g \in Mod_M = \pi_0(Diff^+(M))$ to be the growth rate of the sequence
$\{a_n=\dim\HF(\phi^n)\}$ for  a monotone  representative $\phi\in\symp^m(M,\omega)$ of $g$:
$$ F^{\infty}(g):=\grow(\dim\HF(\phi^n)) $$
\end{dfn}
\begin{ex}
If $\phi$ is  a non-trivial orientation
preserving periodic diffeomorphism of a compact connected surface $M$
of Euler characteristic $\chi(M) <  0$ , then the  periodicity of the
sequence $\dim\HF(\phi^n)$ implies that  for the  corresponding  mapping class $g$ the  asymptotic invariant
$$ F^{\infty}(g):=\grow(\dim\HF(\phi^n))=1 $$

\end{ex}

\begin{ex} Let $\phi$ be  a monotone  diffeomorphism of finite type of a compact connected surface $M$
of Euler characteristic $\chi(M) <  0$ and $g$ a corresponding algebraically finite mapping class.
Let $U$ be the open regular neighborhood of the $k$ reducing
curves $\gamma_1,......,\gamma_k$ in the Thurston theorem, and $M_j$
be the component of $M\setminus U$.Let $F$ be a fixed point class of $\phi$.
Observe from \cite{JG} that if $F\subset M_j$, then $\ind(F, \phi)=\ind(F,\phi_j)$.So if $F$ is counted in $N(\phi)$ but not counted in $\sum_{j} N(\phi_j)$
, it must intersect $U$. But we see from \cite{JG} that a component
of $U$ can intersect at most 2 essential fixed point classes of $\phi$.
Hence we have $N(\phi)\leq \sum_{j} N(\phi_j)$.For the  monotone  diffeomorphism of finite type $\phi$ maps $\phi_j$ are  periodic.
Applying last  inequality to $\phi^n$ and using  remark   \ref{th:ga} we have

$$
0\leq\dim\HF(\phi^n)= \dim H_*(M^{(n)}_{\id},\p{M^{(n)}_{\id}};\Z_2)+N(\phi^n|(M\setminus M^{(n)}_{\id}))\leq
$$
$$
\leq \dim H_*(M^{(n)}_{\id},\p{M^{(n)}_{\id}};\Z_2)+N(\phi^n)\leq
$$
$$
 \dim H_*(M^{(n)}_{\id},\p{M^{(n)}_{\id}};\Z_2)+ \sum_{j} N((\phi_j)^n) +2k\leq Const
$$
by periodicity of $\phi_j$.
Taking the growth rate in $n$, we get
that asymptotic invariant  $ F^{\infty}(g)=1$.
\end{ex}

\begin{rk}\label{conj:1}
For pseudo-Anosov  mapping class   $g \in Mod_M = \pi_0(Diff^+(M))$ we have
$$
\dim HF_*(g) >  N(g), \,\,   F^{\infty}(g) > \limsup_{n \rightarrow \infty} |N(g^n)|^{1/n}=h(\psi)=\lambda > 1,
$$
where  $ N (g)$ denotes the Nielsen  number of  $g$ and $\psi$ is a standard(Thurston canonical form) representative of $g$.

\end{rk}

\begin{rk}
For any  mapping class   $g \in Mod_M = \pi_0(Diff^+(M))$
we have 
$$
\dim HF_*(g) \geq   N(g),\,\,  F^{\infty}(g) \geq \lambda,
$$
 where  
  $\lambda $ is the largest stretching factor of
pseudo-Anosov pieses of  a standard( Thurston canonical form) representative 
of $g$ ( $\lambda:=1 $ if there is no pseudo-Anosov
piece).
\end{rk}

\section{ A generalisation of the  Poincar\'e - Birkhoff theorem and  Arnold conjecture.
 Concluding  remarks} \label{sec:problem}
 A natural generalization of the Poincar\'e- Birkhoff theorem concerns 
the estimation of the number  of fixed points of symplectomorphism $\phi \in \symp(M^{2n},\omega)$. Actually this question was already raised by Birkhoff in \cite{B}. He wrote with reference to the Poincar\'e - Birkhoff theorem:
``Up to present time no proper generalisation of this theorem to higher dimensions has been found, so that its applications remains limited to dynamical systems with two degrees of freedom \cite{B}, page 150.''

Based on the Poincar\'e - Birkhoff theorem Arnold formulated in the 1960s
his famous conjecture:
a Hamiltonian symplectomorphism( time-1 map of a time-dependent  Hamiltonian flow) should have at least as many fixed points as a function on the manifold have critical points.

Let $\phi: M^{2n}\to M^{2n}$ be a Hamiltonian symplectomorphism of a compact symplectic
manifold $(M^{2n},\omega)$. In the case when all fixed points of $\phi$ are  nondegenerate  the Arnold conjecture asserts
that $$\# Fix(\phi) \geq \dim H_*(M,\Q)=\sum_{k=0}^{2n} b_k(M),$$
where $2n=\dim M,  b_k(M)=\dim H_k(M,\Q)$.

The Arnold conjecture in the nondegenerate case has now been proved in full generality.
It  was first proved by Eliashberg \cite{eli}
for Riemann surfaces. For tori of arbitrary dimension it  was proved in the celebrated paper by Conley and Zehnder\cite{conzeh}.
 The most important breakthrough was Floer's
proof of the Arnold conjecture in the nondegenerate case  for monotone symplectic manifolds \cite{Floer}.
His proof was based on Floer homology.
His method has been pushed through by Fukaya-Ono\cite{fukono}, Liu-Tian\cite{liutian} and Hofer-Salamon\cite{hofsal} to establish the nondegenerate case of the Arnold conjecture for all symplectic manifolds.

Some progress  has been made with the original Arnold  conjecture by Rudyak \cite{R},
using a development of Lusternik - Schnirelman theory called category weight.
Using these ideas it was proved in \cite{RO} that the original conjecture 
holds in the case where degeneracies are allowed, provided that both classes
$ [\omega ] $ and $c_1$ vanish on $\pi_2(M^{2n})$.

The Hamiltonian symplectomorphism $\phi$ is isotopic to identity map $id_M$.
In this case all fixed points  $\phi$ are in the same Nielsen fixed point class. The  Nielsen number of $\phi$ is 0 or 1 depending on  Lefschetz number is 0 or not. So, the Nielsen number is very weak  invariant   to estimate the number of fixed points of $\phi$ for Hamiltonian symplectomorphism.
From another side, as we saw  in theorem \ref{thm:main},   for the nontrivial periodic
symplectomorphism $\phi$ of a  surface, the Nielsen number of $\phi$ gives an exact estimation from below for the number of nondegenerate fixed points of $\phi$. These  considerations lead us to the following question
\begin{que}
How to estimate the number of 
 fixed points  of    symplectomorphism (not necessary Hamiltonian)  $\phi \in \symp(M^{2n},\omega)$ which has only {\it nondegenerate} fixed points?
\end{que}

This question can be considered as a  generalisation of the  Poincar\'e - Birkhoff theorem and as a generalisation of the  Arnold conjecture in nondegenerate
case .

\subsection{Algebraically finite  mapping class}

 If $\psi$ is a diffeomorphism of finite type of surface $M$ then  $\psi \in \symp^m(M,\omega)$ for some $\psi$-invariant form  $\omega $.
Suppose now that symplectomorphism $\phi$ has only non-degenerate fixed points
and $\phi$ is Hamiltonian isotopic to $\psi$. Then $\phi \in \symp^m(M,\omega)$
and $\HF(\phi)$ is isomorphic to $\HF(\psi)$.
So,  from theorem \ref{th:ga}  it follows that
$$
\# Fix(\phi) \geq \dim\HF(\phi)=\dim\HF(\psi)=
$$
$$
=\dim H_*(M_{\psi=\id},\p{M_{\psi=\id}};\Z_2)+N(\psi|(M\setminus M_{\psi=\id}))=
$$
$$
=\sum_{k=0}^{2} b_k(M_{\psi=\id},\p{M_{\psi=\id}};\Z_2)+N(\psi|(M\setminus M_{\psi=\id}))
$$
This estimation can be considered as a generalisation of the Poincar\'e - Birkhoff theorem and as a generalisation of Arnold conjecture for symplectomorphism
with nondegenerate fixed points in algebraically finite mapping class,
becouse it implies  Arnold conjecture  for   $\psi=id$.
If  $\psi$ is nontrivial orientation preserving  periodic diffeomorphism
then by theorem \ref{thm:main} $\psi$ is a  monotone symplectomorphism and  by lemma \ref{lemma:jiang} it has only nondegenerate fixed points. The   theorem \ref{thm:main}  implies  an estimation 
$$
\# Fix(\phi) \geq \dim\HF(\phi)=\dim\HF(\psi)= N(\psi)
$$
This estimation can be considered as a  generalisation of  Poincar\'e - Birkhoff theorem  and Arnold conjecture for
a nontrivial  orientation-preserving  periodic mapping class.

\subsection{Pseudo-Anosov mapping class}

  For  pseudo-Anosov  ``diffeomorphism'' $\psi $ in given pseudo-Anosov mapping class $g$
 we also have, as in theorems \ref{thm:main},\ref{thm:main0}
and
 \ref{th:ga}, a topological separation of fixed points
 \cite{Th, JG, I}, i.e the Nielsen number of pseudo-Anosov ``diffeomorphism''
$\psi$   equals to the number of
fixed points of $\psi $  and there are  no connecting orbits between them.
But we have the following difficulties.
Firstly, a  pseudo-Anosov ``diffeomorphism'' is a smooth and  a symplectic
automorphism only on the complement of the singular set of a pair of transverse  measured foliations.
Secondly, in the case of a  pseudo-Anosov ``diffeomorphism''   we have  to deal with
fixed singular  points of index $1 -p$ where $p\geq 3$ is a number of prongs
of fixed singularity . Such  fixed points
are  degenerate from symplectic point of view  and therefore need a local perturbation. The following theorem can be considered as a  generalisation of  Poincar\'e - Birkhoff theorem   for a  symplectomorphism with nondegenerate fixed points in a given pseudo-Anosov mapping class.

\begin{theorem}\label{th:pa}(Cotton-Clay \cite{cot},  Fel'shtyn)
If $\phi$ is  a  symplectomorphism with nondegenerate fixed points in
given pseudo-Anosov mapping class $ \{ \phi \}=g=\{ \psi \} $,  then
 $$\HF(\phi)=\HF(g)  \cong \Z_2^{\sum_{x\in\Fix(\psi)}|\Ind(x)|}$$
and 
$$ \# Fix(\phi) \geq \dim\HF(\phi)=\dim\HF(g)=\sum_{x\in\Fix(\psi)}|\Ind(x)|,
$$
where $\psi$ is a canonical pseudo-Anosov representative of $g$. 
\end{theorem}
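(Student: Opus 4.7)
The plan is to reduce the computation to a specific smooth monotone symplectic representative of $g$ whose Floer complex can be written out explicitly, and then to transfer the answer back to the arbitrary nondegenerate representative $\phi$ by isotopy invariance. Let $\psi$ be the canonical pseudo-Anosov representative. Its fixed points split into regular points (each of index $\pm 1$) and finitely many $p$-prong singular fixed points (of index $1 - p$, with $p \geq 3$). At the singularities $\psi$ is only $C^0$, and these points are degenerate from the symplectic viewpoint, so the first step is to replace $\psi$ by a smooth symplectomorphism $\phi_0$ in the mapping class $g$ with only nondegenerate fixed points. This is done by a local modification supported in small disk neighborhoods of the singular fixed points; using the standard local model at a $p$-prong point one can arrange the perturbation to produce exactly $p - 1$ nondegenerate fixed points near each singularity, whose local indices sum to $1 - p$ in agreement with the Lefschetz index being preserved by isotopy. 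The regular fixed points are left untouched, so
\[
\#\Fix(\phi_0) \;=\; \sum_{x \in \Fix(\psi)} |\Ind(x)|.
\]

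Next, one verifies that $\phi_0$ can be taken monotone. Since $\psi$ is pseudo-Anosov, $\psi_*$ has no eigenvalue equal to $1$ on $H_1(M;\R)$, so $H^1(M;\R)/\im(\id - \phi_0^*) = 0$ and the left-hand term of the exact sequence \eqref{eq:cohomology} vanishes; thus the class $m(\phi_0)$ is forced to be zero and monotonicity is automatic for any $\phi_0$-invariant area form (the existence of such a form is handled by averaging over a finite-order piece of the construction, exactly as in the proofs of Theorems \ref{thm:main} and \ref{thm:main0}). By the (Invariance) property of Seidel's Floer homology stated in Section \ref{sec:floer}, any other nondegenerate monotone representative $\phi$ of $g$ satisfies $\HF(\phi) \cong \HF(\phi_0)$, so it suffices to compute $\HF(\phi_0)$.

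The remaining task is to show that the Floer differential $\p_J$ vanishes on $\CF(\phi_0)$ for a generic $J$. The easy case is the separation across Nielsen classes: any Floer trajectory $u$ satisfying the first equation in \eqref{eq:corbit} determines a path witnessing Nielsen equivalence of its endpoints, so two fixed points in different fixed-point classes of $\phi_0$ cannot be joined. Since the fixed points of a pseudo-Anosov map are pairwise Nielsen inequivalent \cite{Th,JG} and the perturbations are local, fixed points originating from different fixed points of $\psi$ lie in distinct Nielsen classes of $\phi_0$. The harder case, which is the main obstacle, concerns the $p - 1$ fixed points produced near a single $p$-prong singularity: these do share a common Nielsen class, so the topological separation argument fails. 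Here one chooses the local perturbation (as in Cotton-Clay \cite{cot}) to be the time-one map of a carefully designed local Hamiltonian, and uses the explicit structure of the standard $p$-prong model together with an energy estimate confining Floer strips to a thin neighborhood of the singularity to either exclude local index-one trajectories outright or force their count mod $2$ to vanish.

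Combining these steps yields $\HF(\phi_0) \cong \Z_2^{\#\Fix(\phi_0)} = \Z_2^{\sum_{x \in \Fix(\psi)} |\Ind(x)|}$, whence $\HF(\phi) \cong \HF(g)$ has the same dimension, and the inequality $\#\Fix(\phi) = \dim \CF(\phi) \geq \dim \HF(\phi)$ delivers the stated lower bound on the fixed point count. The most delicate point of the whole argument is the local analysis of pseudoholomorphic strips in the standard $p$-prong model; everything else is a packaging of the topological separation mechanism used in Theorems \ref{thm:main}, \ref{thm:main0} and \ref{th:ga}.
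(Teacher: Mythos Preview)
Your overall strategy---smooth $\psi$ locally to a nondegenerate $\phi_0$, separate fixed points by Nielsen class, and handle the $p-1$ local points near each singularity by a direct analysis of the $p$-prong model---matches the paper's outline closely. However, your monotonicity step contains a genuine error. You assert that because $\psi$ is pseudo-Anosov, $\psi_*$ has no eigenvalue $1$ on $H_1(M;\R)$, forcing $H^1(M;\R)/\im(\id-\phi_0^*)=0$ and hence $m(\phi_0)=0$ automatically. This is false: there exist pseudo-Anosov mapping classes in the Torelli group, i.e.\ pseudo-Anosov $\psi$ acting as the identity on $H_1(M)$, so the quotient in \eqref{eq:cohomology} need not vanish and monotonicity is not automatic for an arbitrary invariant area form. (Your parenthetical about ``averaging over a finite-order piece of the construction'' is also out of place here: a pseudo-Anosov map has infinite order and there is no periodic piece to average over; the natural invariant area form comes instead from the product of the two transverse measures.)

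This gap affects both halves of the reduction. For the specific $\phi_0$ one must actually establish monotonicity (or circumvent it), and for the passage to an \emph{arbitrary} nondegenerate $\phi\in g$ the (Invariance) property you quote from Section~\ref{sec:floer} applies only to pairs of \emph{monotone} representatives, so it does not a priori cover a general $\phi$. The paper's proof isolates exactly this issue as its final step and attributes its resolution to Cotton-Clay \cite{cot}, who introduces a weak monotonicity condition relative to a single Nielsen fixed point class (cf.\ the discussion of Nielsen Floer homology in the concluding remarks): one shows each Nielsen class carries a well-defined local Floer homology independent of the representative, and then sums over classes. Without that ingredient your argument does not reach the general $\phi$ in the statement of the theorem.
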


\begin{proof}
We describe main steps of the proof.
Firstly   we smooth   the singular  pseudo-Anosov
map  $\psi$ locally  near  the   singularities using Hamiltonian vector fields to get symplectomorphism $\hat\psi$ in  the given mapping class $g$
with $|\Ind(x)|$ nondegenerate fixed points in the small neighborhood of $x$.
The idea of such smoothing was suggested to the author by Kaoru Ono in 2005.
Full details of this  smoothing    can be  found  in \cite{cot}.
Then we show that all $|\Ind(x)|$ nondegenerate fixed points in the small neighborhood of $x$ are  in the same Nielsen fixed point class.
On the next step we prove that there are no pseudoholomorphic curves
of index 1 between these  nondegenerate fixed points. Then the boundary map in  the Floer complex of $\hat\psi$ is zero  and
$\Z_2$-vector space  $
\CF(\hat\psi) := \Z_2^{\#\Fix(\hat\psi)}=\Z_2^{\sum_{x\in\Fix(\psi)}|\Ind(x)|}$. This immediately  implies that $\HF(\hat\psi)$ is well defined and 
$
\HF(g)=\HF(\hat\psi) \cong
\Z_2^{\sum_{x\in\Fix(\psi)}|\Ind(x)|}$
, and  $ \dim\HF(\hat\psi)= \sum_{x\in\Fix(\psi)}|\Ind(x)|$.
On  the last step of the proof, which was fully justified only  recently  by A. Cotton-Clay \cite{cot},  we show that for any symplectomorphism $\phi$ in a pseudo-Anosov mapping class $g$  with nondegenerate fixed points $\HF(\phi)$ is well defined
and $\HF(\phi)=\HF(g)$.

\end{proof}

\begin{rk}
A. Cotton-Clay  found in \cite{cot} a nice  combinatorial  formula
computing $\HF(\hat\psi)$ using train tracs .
\end{rk}

\begin{rk}
 In \cite{eft} Floer homology were calculated for certain class of pseudo-Anosov
maps which are compositions of positive and negative Dehn twists along loops in $M$
forming a tree-pattern. It is interesting to compare the results  above with 
this calculation.

\end{rk}

\subsection{Reducible mapping class}

Recently, in the  paper \cite{cot},  A. Cotton-Clay calculated
 Seidel's symplectic Floer homology for reducible mapping classes.
This result completing previous computations
in the case of arbitrary compositions of Dehn twists along a disjoint collection of curves \cite{S}, in the  case of  periodic mapping classes \cite{G, ff}, as well as
reducible mapping classes in which the map on each component is periodic \cite{G} and  in  the case of certain compositions of Dehn twists, including some  pseudo-Anosov maps \cite{eft}.

In the  case of reducible mapping classes  a energy estimate forbids holomorphic discs from
crossing reducing curves except when a pseudo-Anosov component
meets an identity component ( with no twisting).
Let us introduce  some notation following \cite{cot}.
Recall the notation of $M_{id}$ for the collection of fixed components
as well as the tree types of boundary:  1) $\p_{+}M_{id}$, 
$\p_{-}M_{id}$ denote the collection of  components of $\p M_{id}$
on which we've joined up with a positive(resp. negative)  twist;
2) the collection of components of $\p M_{\id}$ which meet a pseudo-Anosov component will be denoted $\p_pM_{\id}$.
Additionally let $M_1$ be the collection of periodic components and let $M_2$ bethe collection of pseudo-Anosov components with punctures( i.e. before any perturbation) instead of boundary components wherever there is a boundary component that meets a fixed component.
We further subdivide $M_{\id}$. Let $M_a$ be the collection of fixed components
which don't meet any pseudo-Anosov components. Let $M_{b,p}$ be the collection of fixed components which meet one pseudo-Anosov component at a boundary with
$p$ prongs. In this case, we assign the boundary components to
$\p_{+}M_{\id}$ (this is an arbitrary choice). Let $M^o_{b,p}$ be
the collection of the $M_{b,p}$ with each component punctured once.
Let $M_{c,q}$ be the collection of fixed components which meets at least
two pseudo-Anosov components such that the total number of prongs over all
the boundaries is $q$. In this case, we assign at least one boundary
component to $\p_{+}M_{\id}$ and at least one to $\p_{-}M_{\id}$
(and beyond that, it does not matter).

\begin{theorem}( A. Cotton-Clay \cite{cot})
Let $\phi$ be a perturbed standard form map \cite{cot} in a reducible
mapping class $g$ with choices of the signs of components of $\p_{p} M_{\id}$.Then $\HF(\phi)$ is well-defined and 
$$
\HF(g)=\HF(\phi)\cong H_{*}(M_a,\p_{+}{M_{\id}};\Z_2) \oplus
$$
$$
\oplus_{p}(H_{*}(M_b^o,\p_{+}M_{b};\Z_2)\oplus(\Z_2)^{(p-1)|\pi_{0}(M_{b,p})|}) \oplus
$$
$$
\oplus_{q}(H_{*}(M_c,\p_{+}M_{c};\Z_2)\oplus(\Z_2)^{q|\pi_{0}(M_{c,q})|}) \oplus
$$
$$
\oplus\Z_2^{L(\phi|M_1)}\oplus CF_{*}(\phi|M_2),
$$
where $L(\phi|M_1)$ is the Lefschetz number of  $\phi|M_1$,
the $\Z_2^{L(\phi|M_1)}$ summand is all in even degree, the other
two $\Z_2$ summands are all in odd degree, and $CF_{*}(\phi|M_2)$
denotes the chain complex for $\phi$ on the components $M_2$

\end{theorem}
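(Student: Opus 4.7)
The plan is to build a perturbed standard-form representative $\phi$ of $g$ in the spirit of Theorems 2.7 and 4.2, and then to split the resulting Floer complex according to the decomposition $M = M_1 \cup M_2 \cup M_a \cup M_b \cup M_c$ together with the reducing annuli. Concretely, I would start from a Thurston standard-form map $\psi$ (periodic on $M_1$, canonical pseudo-Anosov on $M_2$, twist/flip-twist along reducing curves, identity on $M_{\id}$), replace each $p$-pronged pseudo-Anosov singularity adjacent to $\p M_2$ by the Hamiltonian smoothing of Theorem 4.2, and add a small Hamiltonian perturbation generated by a function $H$ that is Morse on $\mathrm{int}(M_{\id})$, vanishes on a collar of $\p M_{\id}$, and is identically zero outside $M_{\id}$. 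Monotonicity of $\phi$ follows by combining the three building blocks: Lemmas 2.5--2.6 for the identity/periodic part and the finite-type twists, together with the smoothing monotonicity from Theorem 4.2 for the pseudo-Anosov components.

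Next I would enumerate $\Fix(\phi)$ and group the generators of $\CF(\phi)$ into five families. Interior critical points of $H$ in components of $M_{\id}$ contribute the Morse generators whose sub-complex will compute $H_*(M_a,\p_+ M_{\id};\Z_2)$, $H_*(M^o_b,\p_+M_b;\Z_2)$ and $H_*(M_c,\p_+M_c;\Z_2)$ via Seidel's standard Morse-to-Floer correspondence. The local smoothing produces exactly $p-1$ (respectively $q$) additional non-degenerate fixed points in a collar of each $p$-pronged pA-boundary of $M_{b,p}$ (respectively each $M_{c,q}$ component), all of odd degree by the Lefschetz sign calculation near a smoothed prong. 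Lemma 2.2 shows that fixed points of $\phi|M_1$ are Nielsen-isolated with index $+1$ and contribute $\Z_2^{L(\phi|M_1)}$ in even degree. The smoothed pA fixed points on $M_2$ are by construction the generators of the chain complex $CF_*(\phi|M_2)$ of Theorem 4.2.

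The crucial analytic input is a monotonicity-based energy/index separation for Floer strips: for a generic $\phi$-compatible $J$, every index-$1$ solution of \eqref{eq:corbit} must lie in a single block of the decomposition above, with the sole exception of strips traversing the pA/identity interface at a $p$-pronged boundary where no reducing twist is present. Following the template of Theorem 2.7 and Seidel's collar argument, I would bound the symplectic area of a strip crossing a reducing annulus from below by the total twist along the curve, convert this via monotonicity into a lower bound on the Maslov index, and thereby rule out index-$1$ crossings except at the $\p_p M_{\id}$ interfaces. On the remaining interfaces the Hamiltonian smoothing is engineered so that the extra boundary fixed points are isolated generators whose differentials either cancel or are absorbed into $CF_*(\phi|M_2)$; assembling the homologies of the five resulting blocks yields the stated direct-sum decomposition.

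The main obstacle is precisely this last analytic step: because no reducing twist shields the interface at $p$-pronged pA-adjacent boundaries, one cannot simply forbid connecting trajectories between $M_2$ and collars of $\p_p M_{\id}$, and must instead argue via a careful local normal form for the smoothing that every index-$1$ strip crossing the interface is either captured inside $CF_*(\phi|M_2)$ or cancels against one of the $p-1$ (respectively $q$) extra generators, so that the stated summands $\Z_2^{(p-1)|\pi_0(M_{b,p})|}$ and $\Z_2^{q|\pi_0(M_{c,q})|}$ in odd degree survive to homology. This is precisely the computation carried out in \cite{cot}, and my proof would follow that paper's local normal form and train-track analysis to conclude.
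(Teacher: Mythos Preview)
The paper does not actually prove this theorem: it is stated as a result of Cotton-Clay \cite{cot}, with no \texttt{proof} environment at all, only a short remark afterwards explaining that the $H_*(M_a,\p_+M_{\id};\Z_2)$ and $\Z_2^{L(\phi|M_1)}$ summands arise as in Gautschi's Theorem~\ref{th:ga}, that the $CF_*(\phi|M_2)$ summand reflects the vanishing of differentials on pseudo-Anosov components as in Theorem~\ref{th:pa}, and that the sums over $p$ and $q$ arise ``in the same manner as the first summand.'' So there is essentially nothing in the paper to compare your proposal against beyond that one-paragraph remark.

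Your outline is consistent with that remark and is in fact considerably more detailed: you correctly identify the construction of $\phi$ from a Thurston standard form plus Hamiltonian smoothing on pseudo-Anosov singularities plus a Morse perturbation on $M_{\id}$, the splitting of $\CF(\phi)$ into blocks, and the key analytic difficulty at the $\p_pM_{\id}$ interfaces where no twist separates the pseudo-Anosov and identity regions. You also correctly note that the delicate step --- controlling index-$1$ strips across those interfaces and showing the extra $(p-1)$ and $q$ generators survive to homology in odd degree --- is exactly what \cite{cot} carries out and is not something you can extract from the present paper. In short, your proposal and the paper agree on the high-level picture, and both defer the substantive argument to \cite{cot}; the paper simply does so more tersely.
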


\begin{rk}
The first summand and the $\Z_2^{L(\phi|M_1)}$ are as in R. Gautschi's 
Theorem \ref{th:ga} \cite{G}. The last summand comes from the fact that there
are no differentials in the Floer chain complex on the pseudo-Anosov components
as in the proof of Theorem \ref{th:pa}. The sums  over $p$ and $q$ arise
in the same manner as the first summand.  
\end{rk}

\begin{corollary}
As an application, A. Cotton-Clay gave recently \cite{cot1, cot2} a sharp lower
bound on the number of fixed points of an area-preserving map( with nondegenerate fixed points) in any prescribed mapping class(rel boundary).
\end{corollary}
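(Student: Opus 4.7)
The plan is to combine the dimension computation of $\HF(g)$ furnished by the preceding reducible mapping class theorem with an explicit construction of a representative that attains equality, thereby producing matching upper and lower bounds on the minimum number of nondegenerate fixed points in $g$.

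First, for any symplectomorphism $\phi$ with only nondegenerate fixed points in a mapping class $g$, the underlying $\Z_2$-vector space of the Floer complex is $\CF(\phi) = \Z_2^{\#\Fix(\phi)}$, so passing to homology immediately yields
$$\#\Fix(\phi) \;\geq\; \dim\HF(\phi) \;=\; \dim\HF(g),$$
the last equality being the isotopy invariance of $\HF$ within a monotone mapping class. Cotton-Clay's reducible mapping class theorem then provides an explicit formula for this number in terms of $M_a$, $M_{b,p}$, $M_{c,q}$, $M_1$ and $M_2$, so the lower bound on $\#\Fix(\phi)$ is already determined.

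Second, one constructs a specific perturbed standard form representative $\phi_0$ of $g$ rel boundary whose nondegenerate fixed points match this number term by term. On each fixed component $M_a$, replace the identity by the time-one map of a small autonomous Hamiltonian generated by a Morse function minimizing the number of critical points for the relative homology $H_*(M_a,\p_+M_{\id};\Z_2)$; classical Morse theory on manifolds with boundary realizes this minimum. On the periodic components $M_1$, Lemma \ref{lemma:jiang} guarantees that a periodic representative has only isolated nondegenerate fixed points of index one, counted by $L(\phi|M_1)$. On the pseudo-Anosov components, the local smoothing explained in the proof of Theorem \ref{th:pa} contributes exactly $|\Ind(x)|$ nondegenerate fixed points at each singular fixed point, totalling $\dim\CF(\phi|M_2)$ on $M_2$. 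At each boundary circle with $p$ prongs where a fixed component meets a pseudo-Anosov piece, the boundary smoothing produces $(p-1)$ additional nondegenerate fixed points per component (respectively $q$ for multi-boundary components in $M_{c,q}$), reproducing the combinatorial summands of Cotton-Clay's formula.

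Third, one checks that these explicit fixed points are all Nielsen-separated in the relevant sense, so that the energy/index arguments of Theorems \ref{thm:main0}, \ref{th:ga} and \ref{th:pa} apply and no Floer differential reduces $\dim\HF(\phi_0)$ below $\#\Fix(\phi_0)$. Adding the contributions gives $\#\Fix(\phi_0)=\dim\HF(g)$, which combined with the first step yields sharpness. The main obstacle is the delicate local matching in the collar between a fixed component and an adjacent pseudo-Anosov piece: one must coordinate the Hamiltonian perturbation on the identity side with the pseudo-Anosov smoothing on the other side so that exactly $(p-1)|\pi_0(M_{b,p})|$ and $q|\pi_0(M_{c,q})|$ extra nondegenerate fixed points appear, without producing any index-one pseudoholomorphic strip that would cancel them in $\HF$. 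This analysis, together with the verification that $\HF(\phi_0)=\HF(g)$ for the perturbed standard form, is precisely what is carried out in full detail in \cite{cot1, cot2}.
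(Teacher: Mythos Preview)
The paper does not actually prove this corollary: it is stated without proof, simply attributing the result to Cotton-Clay with references \cite{cot1, cot2}. There is therefore no argument in the paper to compare your proposal against.

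That said, your sketch is a reasonable outline of how such a sharpness statement is typically established. The lower bound $\#\Fix(\phi)\geq\dim\HF(g)$ is indeed immediate from the chain-level inequality, and the computation of $\dim\HF(g)$ in the preceding theorem supplies the explicit number. For sharpness one does need a representative realizing this bound, and your description of assembling it from the perturbed standard form on each piece is the right idea. The genuinely delicate points you flag --- the collar analysis between fixed and pseudo-Anosov components, and the verification that $\HF(\phi_0)$ is well-defined and equals $\HF(g)$ for a not-necessarily-monotone representative --- are exactly the content deferred to \cite{cot1, cot2}, which is consistent with the paper's own treatment of this corollary as a citation rather than a theorem proved in the text.
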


\subsection{Concluding remarks}

\begin{itemize}

\item  Due to P. Seidel  \cite{S1}   $\dim\HF(\phi)$  is a new symplectic invariant
of a four-dimensional symplectic  manifold  with nonzero first Betti number.
This 4-manifold produced from symplectomorphism $ \phi$ by a surgery construction which is a variation of earlier constructions due to McMullen-Taubes,
Fintushel-Stern and J. Smith.We hope that our  asymptotic invariant  and symplectic zeta function $F_{\phi}(z)$  also
give rise to a new  invariants  of contact 3- manifolds
and symplectic 4-manifolds.

\item { \bf Nielsen Floer homology.}    
Let $\phi: M^{2n}\to M^{2n}$ be a  symplectomorphism of a compact symplectic
manifold $(M^{2n},\omega)$. Suppose that  all fixed points of $\phi$ are  nondegenerate.The symplectomorphism $\phi$ has finite number of Nielsen fixed point
classes $F_1, F_2, ...., F_n$ and there is   no pseudoholomorphic curves( instantons) between fixed
points in the different Nielsen fixed point classes because there is no Nielsen
discs between them. This simple observation strongly suggests to consider for every $i$ a local  Floer complex $CF_{*}(\phi, F_i)$   generated by nondegenerate fixed points in the Nielsen fixed point class $F_i$ and to define for every $i$ a local Nielsen Floer
homology $NFH_{*}(\phi, F_i)$ and then to define  Nielsen Floer homology
$NFH_{*}(\phi)$ of symplectomorphism $\phi$ as direct sum over all Nielsen fixed point classes of local Nielsen Floer homology:
 $$NFH_{*}(\phi)=\oplus_{F_i} NFH_{*}(\phi, F_i).$$
\begin{que}
Under which conditions on $\phi\in Symp(M^{2n},\omega)$ Nielsen Floer homology
are well-defined?
\end{que}
The results discussed in the  present article strongly suggest that a calculation of Nielsen Floer homology and investigation of their properties  should  be important for a higher dimensional generalisation of Poincar\'e-Birkhoff theorem and Arnold conjecture in nondegenerate case.
The author came to the idea of Nielsen Floer homology in 2001 after discussions
with V. Turaev. A lack of a weak version of a monotonicity for  the symplectomorphism $\phi$ prevented the development of this theory at that moment.
Recently, Andrew  Cotton-Clay \cite{cot} defined weak monotonicity for 
a given Nielsen fixed point class $F$ of  symplectomorphism  of surface
and proved that Nielsen Floer homology are well defined in this case.
 The properties of $F$-weak monotone symplectomorphism
of surface  play a crucial role in  his  computation of Seidel's symplectic Floer homology for reducible mapping classes.  

\end{itemize}

\end{document}